\newtheorem{thrm}{Theorem}[section]
\newtheorem{lem}[thrm]{Lemma}
\newtheorem{prop}[thrm]{Proposition}
\theoremstyle{definition}
\newtheorem{definition}[thrm]{Definition}
\newtheorem{remark}[thrm]{Remark}
\numberwithin{equation}{section}
\author{George A. Elliott}
\address{Department of Mathematics, University of Toronto, Toronto, Ontario, Canada M5S 2E4}
\email{elliott@math.toronto.edu}
\author{Zhiqiang Li}
\address{College of Mathematics and Information Science, Hebei Normal University, Shijiazhuang City, China 050024}
\address{The Fields Institute, 222 College Street, Toronto, Ontario, Canada, M5T, 3J1}
\email{sxdzq@yahoo.com}
\keywords{K-theory with coefficient,
K-homology, KK-lifting}
\subjclass[2000]{Primary 99X99, Secondary 99Y99}
\begin{document}
\newcommand{\li}{\lim\limits_{\longrightarrow}}

\title[KK--lifting problem for dimension drop interval
algebras]{KK--lifting problem for dimension drop interval algebras }

\begin{abstract}
In this paper, we investigate KK-theory of (generalized) dimension
drop interval algebras (with possibly different dimension drops at
the endpoints), especially on the problem that which KK-class is
representable by a $*$-homomorphism between two such C*-algebras
(allowing tensor product with matrix for the codomain algebra). This
lifting problem makes sense on its own in KK-theory , and also has
application on the classification of C*-algebras which are inductive
limits of these  building blocks. It turns out that when the
dimension drops at the two endpoints are different, there exist
KK-elements which preserve the order structure defined by M.
Dadarlat and T. A. Loring in \cite{DL1} on the mod $p$ K-theory, but
fails to be lifted to $*$-homomorphism. This is different from the
equal dimension drops case as shown by S. Eliers in \cite{Ei}.
\end{abstract}

\maketitle

\tableofcontents
\section{Introduction}
The Elliott program has been a very active research program in the
area of operator algebra. A great number of simple C*-algebras have
been classified by the standard Elliott invariant. For non-simple
case, however, as soon as there is torsion in $K_1$, even if the
algebra has real rank zero, the graded ordered K-group is no longer
sufficient. This was shown first by G. Gong in \cite{Go} for the
case of approximately homogeneous C*-algebras (AH-algebras), and was
shown by M. Dadarlat and T. A. Loring in \cite{DL1} for the case of
approximately (classical) dimension drop C*-algebras considered by
Elliott in \cite{Ell}. (In other words, while the main theorem of
\cite{Ell} was correct as it dealt with circles, it was not correct
in the setting of dimension drop algebras, except in the simple case
--- in which case, interestingly, the proof was also essentially
correct.)

M. Dadarlat and T. A. Loring did prove an isomorphism theorem for
real rank zero inductive limits of (classical) dimension drop
algebras, using the total K-theory group (i.e., K-theory with
coefficient) introduced by G. Gong in \cite{Go} (for the purpose of
giving a counterexample), together with a new order structure which
they defined in a completely general text, but they had to assume
that an order isomorphism arose from a KK-element. For the case of
bounded torsion in $K_1$-group, this assumption was avoided by S.
Eilers in \cite{Ei}; and in \cite{DL2}, M. Dadarlat and T. A. Loring
showed that the KK-element existed in very great generality (the
well--known universal multicoefficient theorem). This was extended
by M. Dadarlat and G. Gong to include a classification of the
AH-algebras considered by Gong in \cite{Go} (and by Elliott and Gong
in [EG] in the simple case).

All this refers only to the original (classical) dimension drop
interval algebras introduced by Elliott in \cite{Ell}. The
non-completeness of the order structure defined by Elliott on the
$K_{*}$-group is essentially related to the KK-lifting problem in
real rank zero setting. Namely, there exist KK-elements between the
classical dimension drop algebras which preserve this order
structure, but fail to be represented by a $*$-homomorphism between
two such algebras. The new order structure on K-theory with
coefficient defined by M. Dadarlat and T. A. Loring together with
the so called Bockstein operations (see also \cite{DL1}) can exactly
solve this KK-lifting question.

Now, we work on the generalized dimension drop algebras, of course
we need to investigate the KK-lifting problem between two such
algebras. The main purpose of the present paper is to show that
generalized dimension drop interval algebras, with different
dimension drops at the two ends of the interval, raise a new
problem. As we show, the existence theorem with the Dadarlat-Loring
order structure on total K-theory fails at the level of building
blocks.
\begin{thrm}\label{main} For
generalized dimension drop algebras $A_{m}=I[m_0,m,$ $m_1]$ and
$B_{n}=I[m_0,n,m_1]$ with $(m_0,m_1)=1$, there exists KK-elements in
$KK(A_m,B_n)$ (certain linear combinations of horizontal eigenvalue
patterns with non symmetric coefficients), such that they preserve
the Dadarlat-Loring order structure on total K-theory, but fail to
be lifted to a $*$-homomorphism between $A_m$ and $B_n$.
\end{thrm}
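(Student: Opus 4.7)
The plan is to exhibit explicit KK-elements with the required properties and verify separately (i) order preservation on total K-theory and (ii) non-liftability. The starting point is a description of $KK(A_m,B_n)$ in terms of eigenvalue patterns. A $*$-homomorphism from $A_m$ into $M_k\otimes B_n$ is determined, up to unitary equivalence and trivial summands, by a multiset of continuous eigenvalue maps $\lambda_i\colon [0,1]\to[0,1]$, subject to endpoint compatibility: the multiplicities of $\lambda_i(0)$ and $\lambda_i(1)$ must be compatible with the dimension drops $m_0$ and $m_1$ at the respective endpoints. The \emph{horizontal} eigenvalue patterns, those given by constant $\lambda_i$, furnish a distinguished family of basic homomorphism classes whose KK-images span (integrally) a sublattice of $KK(A_m,B_n)$.

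Next I would compute, for each horizontal pattern, its action on total K-theory, including the Bockstein maps relating $K_{\ast}$ and $K_{\ast}(\,\cdot\,;\mathbb{Z}/k)$ for $k\mid m$ and $k\mid n$. On the lattice of integer linear combinations of horizontal patterns, liftability by a genuine $*$-homomorphism amounts to a \emph{symmetry} constraint: the coefficients must give non-negative multiplicities at both endpoints simultaneously, where at the endpoint $0$ (resp.\ $1$) the admissible multiplicities are integer multiples of $m_0$ (resp.\ $m_1$). When $m_0=m_1$, this symmetric constraint coincides with the Dadarlat--Loring positivity constraint, which is exactly Eilers' case in \cite{Ei}.

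When $m_0\ne m_1$ and $(m_0,m_1)=1$, Bezout's relation produces an asymmetry. I would construct a pair of horizontal classes $h_0,h_1$ whose K-theoretic effects are controlled so that a non-symmetric combination $\alpha h_0+\beta h_1$ with one negative coefficient still produces a non-negative image on every Bockstein component of $\underline{K}$, hence preserves the Dadarlat--Loring order, while no non-negative integral combination realizing the same class exists compatibly with the endpoint divisibility at \emph{both} $0$ and $1$. The coprimality of $m_0$ and $m_1$ is exactly what allows the trade-off between the two endpoints in the KK-world but blocks a $*$-homomorphic realization, since integer solutions for the multiplicities modulo $m_0$ and modulo $m_1$ cannot be made non-negative at both ends.

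The main obstacle is the non-liftability argument. Order preservation is a finite numerical verification on the six-term exact sequences and the Bockstein ladder. The delicate point is to rule out \emph{all} $*$-homomorphisms realizing the KK-class, not only those of the horizontal type used for the construction. I would handle this by reducing, via the classification of $*$-homomorphisms modulo approximate unitary equivalence (and the fact that $KK$-classes are determined by the combinatorial eigenvalue data in this setting), to a linear programming question over $\mathbb{Z}_{\ge 0}$ with the endpoint divisibility constraints, and then exhibit a specific class in the cokernel of the corresponding map. Once that obstruction class is produced explicitly for the prototypical pair $(m_0,m_1)$ coprime, the general case will follow.
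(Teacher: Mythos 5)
Your overall strategy is the one the paper follows: work with the endpoint evaluations $\delta_0,\delta_1$ (the ``horizontal patterns''), use the Bezout relation $\beta_0m_0+\beta_1m_1=1$ with $\beta_1<0$ to force non-symmetric coefficients, check Dadarlat--Loring positivity by a finite computation, and rule out lifting by a positivity criterion on the representation/K-homology side. But as written the argument has a genuine gap: the theorem is an existence statement, and you never actually produce the KK-element. Every step that carries the content of the proof is deferred (``I would compute\dots'', ``I would construct a pair\dots'', ``once that obstruction class is produced explicitly\dots''). The paper's proof culminates in a concrete verification: for $A_{12}=I[2,12,3]$, $B_n=I[2,n,3]$, take $\beta_0=2$, $\beta_1=-1$, $x=2$, giving $\alpha=4\delta_0-2\delta_1$; one checks the explicit inequalities of its Proposition 5.8 to see that $\Gamma(\alpha;p)$ is DL-positive, and one checks against the relation $\tfrac{m}{m_0}\delta_0=\tfrac{m}{m_1}\delta_1$ (here $6\delta_0=4\delta_1$) that $4\delta_0-2\delta_1$ admits no non-negative rewriting, hence does not lift. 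Without such a witness the theorem is not proved.

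Two supporting ingredients are also missing and are not routine. First, your ``linear programming over $\mathbb{Z}_{\ge0}$'' characterization of liftability must be an if-and-only-if; the paper gets this from Jiang--Su's theorem that $\alpha\in KK(A_m,B_n)$ lifts precisely when $\alpha^*$ is positive on K-homology, where $K^0(A_m)$ is generated by $[V_0],[V_1]$ subject to $\tfrac{m}{m_0}[V_0]=\tfrac{m}{m_1}[V_1]$. You gesture at this via ``classification of $*$-homomorphisms modulo approximate unitary equivalence'' but do not establish it, and without it you cannot exclude non-horizontal liftings. Second, reducing DL-order preservation to a ``finite numerical verification'' requires knowing generators of the positive cone $K_0^+(A_m;G_p)$ over $\mathbb{Z}_{\ge0}$; the paper must prove (its Lemma 3.4) that $[\delta_0],[\delta_1],[id],[\overline{id}]$ generate the cone, and separately prove the structure theorem showing every triple in $\mathbf{Hom}(\mathbf{K}(A_m;p),\mathbf{K}(B_n;p))$ with $K_1$-multiplicity zero is of the form $(\beta_0x-dm_1)\delta_0+(\beta_1x+dm_0)\delta_1$ --- this is what makes the negative coefficient unavoidable rather than merely available. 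Filling in these three items would turn your outline into the paper's proof.
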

To achieve this, we realize each KK-element on both ordered K-groups
with coefficient and ordered K-homology groups of two such algebras.
On one hand, we need to calculate the K-groups with coefficient for
the generalized dimension drop algebras, the positive cone and all
the related Bockstein operations. Moreover, we need to have complete
analysis on the structure of induced elements of KK-elements on
ordered K-groups with coefficient, from this analysis we can obtain
the condition under which a KK-element can preserve the
Dadarlat-Loring order structure. On the other hand, X. Jiang and H.
Su studied these generalized dimension drop interval algebras in
\cite{JS}, they gave a criterion for KK-lifting problem between two
such algebras. Their criterion is some positivity on the K-homology
groups of these algebras. By our analysis, the different dimension
drops at the two endpoints give us flexibility for KK-elements to
preserve the Dadarlat-Loring order but do not satisfy Jiang-Su's
criterion for KK-lifting. Therefore, we succeed in finding the
KK-elements in the main theorem above.

The paper is organized as follows. In section 2, some preliminaries
are given about the generalized dimension drop algebras and the mod
$p$ K-theory, including the Dadarlat-Loring order structure and all
the Bockstein operations we need. In section 3, we calculate the
K-theory with coefficient for generalized dimension drop algebras,
including the positive cone, etc. In section 4, we make analysis on
the behavior of KK-elements on K-theory with coefficient, and figure
out the structure of morphisms on K-theory with coefficient. In
section 5, we investigate the KK-lifting problem for generalized
dimension drop algebras, and prove the main Theorem \ref{main},
namely, give examples of KK-elements which preserve the
Dadarlat-Loring order but fail be lifted to a $*$-homomorphism.

\section{Notation and preliminaries}

\begin{definition} A dimension drop algebra, denoted by
$\textbf{I}[m_0,m,m_1]$, is a C*-algebra of the
form:$$\textbf{I}[m_{0},m,m_{1}]=\{f\in
\textrm{C}([0,1],\textrm{M}_m):f(0)=a_{0}\otimes
\textrm{id}_{\frac{m}{m_{0}}},
f(1)=\textrm{id}_{\frac{m}{m_{1}}}\otimes a_{1}\},$$ \noindent where
$a_0$ and $a_1$ belong to $\textrm{M}_{m_0}(\mathbb{C})$ and
$\textrm{M}_{m_1}(\mathbb{C})$ respectively.
\end{definition}

Note that the classical dimension drop interval algebras, both the
non-unital one $\textrm{I}_p=\{f\in
\textrm{C}([0,1],\textrm{M}_p):f(0)=0, f(1)\in \mathbb{C}\}$ and the
unital one $\tilde{\textrm{I}}_p=\{f\in
\textrm{C}([0,1],\textrm{M}_p):f(0), f(1)\in \mathbb{C}\}$ are
included in the definition above. For the (generalized) dimension
drop interval algebra, the two singular irreducible representations
at the endpoints $V_0(f)=a_0$ and $V_1(f)=a_1$ are important for
further analysis, these two representations exactly reflect the
information of different dimension drops.

Moreover, there are four basic $*$-homomorphisms $\delta_0$,
$\delta_1$, $id_{m,n}$, and $\overline{id_{m,n}}$ between such
dimension drop interval algebras which play central roles later, we
introduce them here:
$$\delta_i: \textbf{I}[m_0,m,m_1]\longrightarrow
\textbf{I}[m_{0},n,m_1]\otimes \textrm{M}_{m_i}, i=0,1,$$ is defined
by
$$\delta_{i}(f)=\left(
                                                              \begin{array}{cccc}
                                                                V_{i}(f) &  &  &  \\
                                                                 & V_{i}(f) &  &  \\
                                                                 &  & \ddots &  \\
                                                                 &  &  & V_{i}(f) \\
                                                              \end{array}
                                                            \right).
$$ we need $(m_0,m_1)=1$ for this.
$$id_{m,n}: \textbf{I}[m_0,m,m_1]\longrightarrow
\textbf{I}[m_{0},n,m_1]\otimes \textrm{M}_{\frac{m}{(m,n)}}$$ is
defined by
$$id_{m,n}(f)(t)=\left(
          \begin{array}{cccc}
            f(t) &  &  &  \\
             & f(t) &  &  \\
             &  & \ddots &  \\
             &  &  & f(t) \\
          \end{array}
        \right),
$$ where $f(t)$ repeats $\frac{n}{(m,n)}$ times.
$$\overline{id}_{m,n}: \textbf{I}[m_0,m,m_1]\longrightarrow
\textbf{I}[m_{0},n,m_1]\otimes \textrm{M}_{\frac{m}{((m,n),s)}},$$
is defined by $$\overline{id}_{m,n}(f)(t)=\left(
                                      \begin{array}{cccc}
                                        f(1-t) &  &  &  \\
                                         & f(1-t) &  &  \\
                                         &  & \ddots &  \\
                                         &  &  & f(1-t)\\
                                      \end{array}
                                    \right)
,$$ where $s=\frac{m}{m_0m_1}$, we also need $(m_0,m_1)=1$. Note
that the non symmetric sizes appear above are also caused by the
different dimension drops at the two endpoints.

In the next, we give a few basic preliminaries about K-theory with
coefficient.

Given a natural number $p\geq 2$ (not necessarily a prime), denote
$\mathbb{Z}/p\mathbb{Z}$ by $\mathbb{Z}_p$, the mod $p$ K-theory for
C*-algebras was studied by \cite{Cu}, \cite{S0} and \cite{S}. Given
any C*-algebra $A$, to define $K_0(A;\mathbb{Z}_p)$, one may choose
any nuclear C*-algebra $P$ in the bootstrap class of \cite{RS} such
that $K_0(P)=0$ and
 $K_1(P)=\mathbb{Z}_p$, then $K_0(A;\mathbb{Z}_p)\cong K_1(A\otimes P)\cong KK(P,A)$ (see \cite{S}, \cite{Bla}). Conventionally, we choose $P$ to be the
 classical dimension drop interval algebra $\textrm{I}_p$. Throughout this paper, let
 $G_p=\mathbb{Z}\oplus\mathbb{Z}_p$, then $K_0(A;G_p)$ is defined as
 $K_0(A)\oplus K_0(A;\mathbb{Z}_p)$, and we use $\bar{n}$ to denote the mod $p$ congruence class of the natural number $n$. By using the split short exact
 sequence $0\rightarrow \textrm{I}_p\rightarrow \tilde{\textrm{I}}_p\rightarrow \mathbb{C}\rightarrow 0$,
 one can see that $K_0(A;G_{p})=KK(\tilde{\textrm{I}}_{p},A)$.
So we
 summarize this as definition.
 \begin{definition} For any C*-algebra $A$, and any natural number
 $p\geq 2$, the mod $p$ K-theory is defined as follows:

 (i) $K_{0}(A; \mathbb{Z}_{p})\triangleq KK(\textrm{I}_{p}, A)$,

 (ii) $K_0(A; G_p)\triangleq KK(\tilde{\textrm{I}}_p, A)$.

 Similarly, the mod $p$ $K_1$-group can also be defined.
\end{definition}

As we mentioned in the introduction, the crucial thing about mod $p$
K-theory is the order structure defined by M. Dadarlat and T. A.
Loring on $K_0(A; G_p)$, and together with the Bockstein operations
(see \cite{DL1}).

\begin{definition} (Dadarlat-Loring order) $$K^{+}_{0}(A;G_p)\triangleq
\{([\varphi(1)], [\varphi|_{I_p}])\,|\, \varphi \in Hom(\widetilde{
I_p}, M_{k}(A))\, \text{for some integer}\, k \}.$$
\end{definition}

\begin{lem} (see \cite{DL1}) There is a natural short exact sequence of
groups:$$\textrm{K}_0(\textrm{A})\stackrel{\times p}\rightarrow
\textrm{K}_0(\textrm{A})\stackrel{\mu_{\textrm{A};p}}\rightarrow
\textrm{K}_0(\textrm{A};\mathbb{Z}_p)\stackrel{\nu_{\textrm{A};p}}\rightarrow
\textrm{K}_1(\textrm{A})\stackrel{\times p}\rightarrow
\textrm{K}_1(\textrm{A}).$$ where $p\geq 2$, $\mu_{\textrm{A};p},
\nu_{\textrm{A};p}$ are the Bockstein operations defined by the
Kasparov product with the element of
$\textrm{K}\!\textrm{K}(I_p,\mathbb{C})$ given by the evaluation
$\delta_1:\textrm{I}_p\rightarrow \mathbb{C}$ and the element of
$\textrm{K}\!\textrm{K}^1(\mathbb{C},I_p)$ given by the inclusion
$i:\textrm{S}\textrm{M}_p\rightarrow \textrm{I}_p$ respectively.
\end{lem}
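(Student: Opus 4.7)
The natural approach is to derive the sequence from the standard six-term exact sequence in KK-theory associated to a short exact sequence of coefficient algebras. The relevant extension is
$$0 \longrightarrow SM_p \longrightarrow I_p \stackrel{\delta_1}{\longrightarrow} \mathbb{C} \longrightarrow 0,$$
where the kernel $SM_p = C_0((0,1), M_p)$ is the suspension of $M_p$. This identifies $I_p$ with the mapping cone of the unital inclusion $\iota : \mathbb{C} \hookrightarrow M_p$ sending $1 \mapsto 1_{M_p}$, a viewpoint that will be essential when identifying the two flanking maps with multiplication by $p$.

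Applying the contravariant bifunctor $KK(-,A)$ yields a cyclic six-term exact sequence whose terms I would identify via standard KK-isomorphisms: $KK(\mathbb{C},A) = K_0(A)$, $KK^1(\mathbb{C},A) = K_1(A)$, $KK(I_p,A) = K_0(A;\mathbb{Z}_p)$ by the definition recalled above, and $KK(SM_p,A) \cong KK^1(M_p,A) \cong K_1(A)$ via the suspension isomorphism followed by the Morita KK-equivalence $\mathbb{C} \sim_{KK} M_p$ implemented by $[e_{11}]$. Reading off the resulting cycle produces the asserted five-term sequence. Moreover, the map $K_0(A) \to K_0(A;\mathbb{Z}_p)$ is by construction the Kasparov product with $[\delta_1] \in KK(I_p,\mathbb{C})$, hence equals $\mu_{A;p}$; and the connecting map $K_0(A;\mathbb{Z}_p) \to K_1(A)$ is, after the Morita identification, Kasparov product with the class in $KK^1(\mathbb{C},I_p)$ of the inclusion $i : SM_p \hookrightarrow I_p$, hence equals $\nu_{A;p}$.

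The step requiring genuine content is identifying the two flanking maps with multiplication by $p$. The mapping-cone picture of $I_p$ settles this: the connecting map $KK(SM_p,A) \to KK^1(\mathbb{C},A)$ factors, under the Morita identification, as Kasparov product with the class $[\iota] \in KK(\mathbb{C},M_p)$ of the unital inclusion. But $[1_{M_p}] = p \cdot [e_{11}]$ in $K_0(M_p) \cong \mathbb{Z}$, so under the identification $KK(\mathbb{C},M_p) \cong KK(\mathbb{C},\mathbb{C}) = \mathbb{Z}$ this class corresponds to $p$; hence the flanking map $K_1(A) \to K_1(A)$ is $\times p$. Naturality of the six-term sequence together with Bott periodicity transports the same conclusion to the flanking map $K_0(A) \to K_0(A)$. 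This computation of the class of the unital inclusion is the only non-formal ingredient; all other identifications reduce to definitions or to standard KK-identities.
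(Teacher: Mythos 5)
The paper offers no proof of this lemma at all: it is quoted verbatim from Dadarlat--Loring \cite{DL1}, so there is nothing internal to compare your argument against. On its own merits your argument is the standard derivation and is essentially correct: you apply the contravariant six-term sequence for $KK(-,A)$ to the semisplit extension $0\to SM_p\to I_p\xrightarrow{\ \delta_1\ }\mathbb{C}\to 0$, identify $KK(I_p,A)$ with $K_0(A;\mathbb{Z}_p)$ by definition and $KK(SM_p,A)$ with $K_1(A)$ via suspension and the Morita equivalence $\mathbb{C}\sim M_p$, and then use the mapping-cone description of $I_p$ (cone of the unital inclusion $\iota:\mathbb{C}\to M_p$) to identify the boundary maps with Kasparov product by $[\iota]=p\cdot[e_{11}]$, i.e.\ with multiplication by $p$. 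Two minor points you should make explicit if this were written out in full: (i) exactness of the six-term sequence in the first variable needs the extension to be semisplit --- there is no $*$-homomorphism splitting of $\delta_1$ (a projection path from $0$ to $1_{M_p}$ is impossible), but a completely positive splitting exists since everything in sight is nuclear; (ii) the identification of the connecting maps with $\times p$ holds only up to sign, which is harmless for exactness but worth flagging since the paper later uses explicit formulas for $\nu_{A;p}$ with specific signs. Neither point is a gap in the logic.
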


\begin{lem}\label{comp} For any KK-element $\alpha\in
\textrm{K}\!\textrm{K}(\textrm{A},\textrm{B})$, where
$\textrm{A},\textrm{B}$ are two C*-algebras, then $\alpha$ induces
the following commutative
diagram:$$
            \begin{array}{ccccc}
              \textrm{K}_0(\textrm{A})\stackrel{\times p}\longrightarrow & \textrm{K}_0(\textrm{A})\stackrel{\mu_{\textrm{A};p}}\longrightarrow& \textrm{K}_0(\textrm{A};\mathbb{Z}_p)\stackrel{\nu_{\textrm{A};p}}\longrightarrow & \textrm{K}_1(\textrm{A})\stackrel{\times p}\longrightarrow & \textrm{K}_1(\textrm{A}) \\
               & \Big\downarrow\!{\textrm{K}_0(\alpha)} & \Big\downarrow\!{\textrm{K}_0(\alpha;\mathbb{Z}_p)} & \Big\downarrow\!{\textrm{K}_1(\alpha)} &  \\
              \textrm{K}_0(\textrm{B})\stackrel{\times p}\longrightarrow & \textrm{K}_0(\textrm{B})\stackrel{\mu_{\textrm{B};p}}\longrightarrow& \textrm{K}_0(\textrm{B};\mathbb{Z}_p)\stackrel{\nu_{\textrm{B};p}}\longrightarrow&  \textrm{K}_1(\textrm{B})\stackrel{\times p}\longrightarrow  & \textrm{K}_1(\textrm{B})\\
            \end{array}.
$$
\end{lem}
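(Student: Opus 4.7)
The plan is to view the entire diagram as an instance of the naturality of the Kasparov product, then close the three inequivalent squares separately. First I will rewrite every vertical arrow and every Bockstein map as a Kasparov product. Concretely, the induced maps are right products with $\alpha$: for $x\in KK(\mathbb{C},A)=K_0(A)$ and $y\in KK(I_p,A)=K_0(A;\mathbb{Z}_p)$ and $z\in KK^{1}(\mathbb{C},A)=K_1(A)$, we have $K_0(\alpha)(x)=x\otimes_A\alpha$, $K_0(\alpha;\mathbb{Z}_p)(y)=y\otimes_A\alpha$, and $K_1(\alpha)(z)=z\otimes_A\alpha$. The Bockstein operations, by their definition in the preceding lemma, are left products with fixed classes: $\mu_{A;p}(x)=\delta_1\otimes_{\mathbb{C}}x$ and $\nu_{A;p}(y)=i\otimes_{I_p}y$, with the same formulas for $B$ in place of $A$.

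Next I will close the two middle squares by a one-line invocation of associativity of the Kasparov product. For the $\mu$-square, starting from $x\in K_0(A)$, one chases
\[
K_0(\alpha;\mathbb{Z}_p)\bigl(\mu_{A;p}(x)\bigr)=(\delta_1\otimes_{\mathbb{C}}x)\otimes_A\alpha=\delta_1\otimes_{\mathbb{C}}(x\otimes_A\alpha)=\mu_{B;p}\bigl(K_0(\alpha)(x)\bigr),
\]
and identically for the $\nu$-square with $i$ in place of $\delta_1$, using $(i\otimes_{I_p}y)\otimes_A\alpha=i\otimes_{I_p}(y\otimes_A\alpha)$ (here one needs the mild bookkeeping that the $\mathbb{Z}/2$-grading from $KK^{1}$ is preserved by the product with an ungraded $\alpha$, so the result lands in $K_1(B)$ as required).

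Finally, the two outer $\times p$ squares commute for a trivial reason: the horizontal maps are multiplication by $p$ in an abelian group, and every induced map $K_i(\alpha)$, $K_i(\alpha;\mathbb{Z}_p)$ is a group homomorphism, so it automatically commutes with multiplication by the integer $p$.

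There is no substantive obstacle; the statement is a naturality assertion whose only nontrivial ingredient is the associativity of the Kasparov product, which is standard (see e.g. \cite{Bla}). The point I want to be careful about in the writeup is that the identifications $K_0(A;\mathbb{Z}_p)=KK(I_p,A)$ and $K_1(A)=KK^{1}(\mathbb{C},A)$ are the ones fixed in the preceding definition, so that the two descriptions of the vertical maps and of the Bockstein maps agree on the nose and the associativity computation above can be read off directly.
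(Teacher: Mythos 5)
Your argument is correct and is exactly the paper's proof, which consists of the single sentence ``This follows from the associativity of the Kasparov product''; you have merely spelled out the associativity computation for each square and noted the trivial commutation of the $\times p$ squares with group homomorphisms. No further comment is needed.
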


\begin{proof} This follows from the associativity of the Kasparov
product.
\end{proof}
 We will denote by
$\textbf{Hom}(\textbf{K}(\textrm{A};p),\textbf{K}(\textrm{B};p)$ the
group of the triples $(x,\varphi,y)$ such that the diagram above
commutes. For each $\alpha\in KK(A,B)$, the induced triple
$$\Gamma(\alpha;p)=(\textrm{K}_0(\alpha),\textrm{K}_0(\alpha;\mathbb{Z}_p),\textrm{K}_1(\alpha))$$ lives in
$\textbf{Hom}(\textbf{K}(\textrm{A};p),\textbf{K}(\textrm{B};p)$.

\section{K-theory with coefficient for generalized dimension drop interval algebras}
In previous section, we have four $*$-homomorphism between two
generalized dimension drop interval algebras. We also have these
homomorphisms from $\tilde{I}_p$ to $A_m=\textrm{I}[m_0,m,m_1]$ as
follows: $\delta_0, \delta_1, id$, and $\overline{id}$.
$$\delta_i: \tilde{I}_p\longrightarrow
\textbf{I}[m_{0},m,m_1], i=0,1,$$ is defined by
$$\delta_{i}(f)=\left(
                                                              \begin{array}{cccc}
                                                                V_{i}(f) &  &  &  \\
                                                                 & V_{i}(f) &  &  \\
                                                                 &  & \ddots &  \\
                                                                 &  &  & V_{i}(f) \\
                                                              \end{array}
                                                            \right).
$$ $V_i(f)$ repeats $m$ times.
$$id: \tilde{I}_p\longrightarrow
\textbf{I}[m_{0},m,m_1]\otimes \textrm{M}_{\frac{p}{(m,p)}}$$ is
defined by
$$id(f)(t)=\left(
          \begin{array}{cccc}
            f(t) &  &  &  \\
             & f(t) &  &  \\
             &  & \ddots &  \\
             &  &  & f(t) \\
          \end{array}
        \right),
$$ where $f(t)$ repeats $\frac{m}{(m,p)}$ times.
$$\overline{id}: \tilde{I}_p\longrightarrow
\textbf{I}[m_{0},m,m_1]\otimes \textrm{M}_{\frac{p}{(m,p)}},$$ is
defined by $$\overline{id}(f)(t)=\left(
                                      \begin{array}{cccc}
                                        f(1-t) &  &  &  \\
                                         & f(1-t) &  &  \\
                                         &  & \ddots &  \\
                                         &  &  & f(1-t)\\
                                      \end{array}
                                    \right)
,$$ where $f(1-t)$ also repeats $\frac{m}{(m,p)}$ times.

Before the calculation of the K-theory with coefficient for
generalized dimension drop interval algebras
$A_m=\textrm{I}[m_0,m,m_1]$, we need the following lemma in
\cite{DL1}.

\begin{lem}\label{basic}
For the complex number $\mathbb{C}$, we have that
$K_0(\mathbb{C};G_p)=\mathbb{Z}\oplus\mathbb{Z}_p$, and
$K^{+}_{0}(\mathbb{C};G_p)=\{(a,\bar{b})\in
\mathbb{Z}\oplus\mathbb{Z}_p\,|\, a\geq b\}$. Moreover, we have the
identification given by $[\delta_0]=(1,\bar{0})$, and
$[\delta_1]=(1,\bar{1})$.

\end{lem}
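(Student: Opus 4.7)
The plan is to attack the three assertions in sequence. For the group identification, I would invoke the Bockstein six-term sequence (the unlabeled lemma preceding Lemma~\ref{comp}) with $A=\mathbb{C}$. Since $K_0(\mathbb{C})=\mathbb{Z}$ and $K_1(\mathbb{C})=0$, the sequence collapses to $\mathbb{Z}\xrightarrow{\times p}\mathbb{Z}\xrightarrow{\mu_{\mathbb{C};p}} K_0(\mathbb{C};\mathbb{Z}_p)\to 0$, so $K_0(\mathbb{C};\mathbb{Z}_p)\cong\mathbb{Z}_p$ with generator $\mu_{\mathbb{C};p}(1)=[\delta_1]$, where here $\delta_1:I_p\to\mathbb{C}$ is the evaluation at $1$. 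Combining with the splitting coming from $0\to I_p\to\tilde{I}_p\to\mathbb{C}\to 0$ then gives $K_0(\mathbb{C};G_p)=\mathbb{Z}\oplus\mathbb{Z}_p$.

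For the two distinguished classes $[\delta_0],[\delta_1]\in KK(\tilde{I}_p,\mathbb{C})=K_0(\mathbb{C};G_p)$, I would simply read off the two coordinates $([\varphi(1)],[\varphi|_{I_p}])$ in the definition of the Dadarlat--Loring cone. Both $\delta_0$ and $\delta_1$ are unital, so the first coordinate $[\delta_i(1)]$ is $1\in\mathbb{Z}$ for $i=0,1$. Since every element of $I_p$ vanishes at $0$, the restriction $\delta_0|_{I_p}$ is the zero $*$-homomorphism, contributing $\bar 0$; whereas $\delta_1|_{I_p}$ is by construction the generator $\bar 1$ identified in the previous paragraph. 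This yields $[\delta_0]=(1,\bar 0)$ and $[\delta_1]=(1,\bar 1)$.

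The substantive step is the positive cone. The main structural input I would use is that the primitive ideal space of $\tilde{I}_p$ is the interval $[0,1]$, with irreducible quotient $M_p$ at each interior point $t$ (via $\mathrm{ev}_t$) and $\mathbb{C}$ at each endpoint (via $\delta_0,\delta_1$). As a consequence, every $*$-homomorphism $\varphi\colon\tilde{I}_p\to M_k(\mathbb{C})$ is unitarily equivalent to a direct sum $\bigoplus_{j=1}^{r}\mathrm{ev}_{t_j}\oplus a\cdot\delta_0\oplus b\cdot\delta_1$, with $t_j\in(0,1)$ and $r,a,b\ge 0$, so that $[\varphi(1)]=pr+a+b$. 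The key homotopy observation is that $\mathrm{ev}_t|_{I_p}\colon I_p\to M_p$ is connected to the zero map through the continuous family of $*$-homomorphisms $\Phi_s(f)=f(st)$, $s\in[0,1]$ (well-defined because $f(0)=0$ for $f\in I_p$), so its class vanishes in $K_0(\mathbb{C};\mathbb{Z}_p)$. Together with $\delta_0|_{I_p}=0$ and $[\delta_1|_{I_p}]=\bar 1$, this gives $[\varphi|_{I_p}]=\bar b$, so the image class is $(pr+a+b,\bar b)$.

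It remains to check that the set of such images coincides with $\{(a,\bar b)\in\mathbb{Z}\oplus\mathbb{Z}_p:a\ge b\}$, where $b$ is understood as the canonical representative in $\{0,1,\dots,p-1\}$. One inclusion is immediate: $pr+a_0+b\ge b\ge b\bmod p$. For the reverse, given $(A,\bar B)$ with $A\ge B$, take $r=0$, multiplicity $b=B$ of $\delta_1$, and multiplicity $a=A-B\ge 0$ of $\delta_0$. The main obstacle in the plan is the decomposition of an arbitrary $\varphi$ into irreducibles; this is a standard fact from the representation theory of subhomogeneous $C^*$-algebras, but must be invoked carefully, and everything else in the proof is routine bookkeeping.
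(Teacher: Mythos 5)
Your proof is correct, and it follows exactly the route the paper relies on: the paper gives no proof of this lemma (it is quoted from Dadarlat--Loring \cite{DL1}), but the remark immediately following it records precisely your decomposition of $\varphi$ into $c_0$ copies of $\delta_0$, $c_1$ copies of $\delta_1$, and interior evaluations contributing $(p,\bar 0)$. Your write-up simply supplies the details (the Bockstein computation, the null-homotopy $f\mapsto f(st)$ of $\mathrm{ev}_t|_{I_p}$, and the cone identification) that the paper takes for granted.
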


\begin{remark} Note that we not only use this result, but also the way of
identification, namely, for every $*$-homomorphism
$\phi:\tilde{\textrm{I}}_p\rightarrow \textrm{M}_n(\mathbb{C})$, by
using its irreducible representation, we assume $\phi$ can be
decomposed into $c_0$ copies of $\delta_0$, $c_1$ copies of
$\delta_1$, and $m$ copies of $\delta_t$ for $t\in (0,1)$, then
$[\phi]=(c_0+c_1+mp,\bar{c}_1)$.
\end{remark}

\begin{thrm}\label{calcu} For any natural number $p\geq2$, we have
that $\textrm{K}_0(A_m;\textrm{G}_p)$ $=\{(b',\bar{b},c',\bar{c})\in
(\mathbb{Z}\oplus \mathbb{Z}_p)\oplus(\mathbb{Z}\oplus
\mathbb{Z}_p)\,|\,\dfrac{m}{m_1}c'-\dfrac{m}{m_0}b'=0,\dfrac{m}{m_1}c-\dfrac{m}{m_0}b\in
p\mathbb{Z}\}\cong\mathbb{Z}\oplus \textrm{Z}(m,p)$, where
$\textrm{Z}(m,p)=\{(\bar{b},\bar{c})\in\mathbb{Z}_p\oplus\mathbb{Z}_p\,|\,\dfrac{m}{m_1}c-\dfrac{m}{m_0}b\in
p\mathbb{Z}\}$, and the isomorphism is given by
$(a,\bar{b},\bar{c})\in\mathbb{Z}\oplus
\textrm{Z}(m,p)\longrightarrow
(a\dfrac{m_0}{(m_0,m_1)},\bar{b},a\dfrac{m_1}{(m_0,m_1)},\bar{c})$.

$K^{+}_{0}(A_m;G_p)=\{(a,\bar{b},\bar{c})\in
\mathbb{Z}\oplus\mathbb{Z}(m,p)\,|\,a\dfrac{m_0}{(m_0,m_1)}\geq b,
a\dfrac{m_1}{(m_0,m_1)}\geq c\}$.

And, we have the following identifications:
\begin{align*}
&[\delta_0]=((m_0,m_1),\bar{0},\bar{0}),\;[\delta_1]=((m_0,m_1),\bar{m}_0,\bar{m}_1),
\\
&[id]=\dfrac{p}{(p,m)}((m_0,m_1),\bar{0},\bar{m}_1),
\;[\overline{id}]=\dfrac{p}{(p,m)}((m_0,m_1),\bar{m}_0,\bar{0}).
\end{align*} If we further assume that $(m_0,m_1)=1$ and $m|p$, then
$[\delta_0], [\delta_1]$, and $[id]$ generate $K_0(A_m; G_p)$. Then,
the Bockstein operations are given by

$\mu_{A_{m};\,p}=\left(
                   \begin{array}{c}
                     m_0 \\
                     m_1 \\
                   \end{array}
                 \right)
$, and $\nu_{A_{m};\,p}=(-\dfrac{m}{pm_0}, \dfrac{m}{pm_1})$.
\end{thrm}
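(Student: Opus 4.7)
The plan is to apply the six-term exact sequence of $KK(\tilde{\textrm{I}}_p,-)$ to the canonical extension defining $A_m$, then extract positivity and the identifications of the four distinguished classes from Lemma~\ref{basic} applied at the two endpoints. The starting point is
\[
0\longrightarrow SM_m\longrightarrow A_m\xrightarrow{(V_0,V_1)} M_{m_0}\oplus M_{m_1}\longrightarrow 0,
\]
whose quotient is the pair of singular endpoint representations. Lemma~\ref{basic} gives $K_0(M_{m_i};G_p)=\mathbb{Z}\oplus\mathbb{Z}_p$ and $K_1(M_{m_i};G_p)=0$, while $K_*(SM_m;G_p)$ is concentrated in odd degree. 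The resulting six-term sequence therefore collapses to
\[
0\to K_0(A_m;G_p)\to(\mathbb{Z}\oplus\mathbb{Z}_p)^{2}\xrightarrow{\partial}\mathbb{Z}\oplus\mathbb{Z}_p\to K_1(A_m;G_p)\to 0.
\]
Since the endpoint embeddings of $A_m$ are $a_0\mapsto a_0\otimes\mathrm{id}_{m/m_0}$ and $a_1\mapsto\mathrm{id}_{m/m_1}\otimes a_1$, the boundary acts by multiplication by $m/m_0$ and $m/m_1$ on the two factors of $(\mathbb{Z}\oplus\mathbb{Z}_p)^2$, giving
\[
\partial\bigl((b',\bar b),(c',\bar c)\bigr)=\bigl(\tfrac{m}{m_1}c'-\tfrac{m}{m_0}b',\ \tfrac{m}{m_1}\bar c-\tfrac{m}{m_0}\bar b\bigr).
\]
Its kernel is exactly the set in the theorem: the integer equation $m_1b'=m_0c'$ forces $b'=a\,m_0/(m_0,m_1)$ and $c'=a\,m_1/(m_0,m_1)$ for some $a\in\mathbb{Z}$, and the mod-$p$ equation is by definition the condition cutting out $Z(m,p)$.

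For the positive cone, every class in $K_0^+(A_m;G_p)$ is represented by a $*$-homomorphism $\varphi:\tilde{\textrm{I}}_p\to M_k(A_m)$. Composing with $V_0$ and $V_1$ yields $*$-homomorphisms into $M_{km_0}$ and $M_{km_1}$ whose classes are automatically positive, and Lemma~\ref{basic} then forces $a\,m_0/(m_0,m_1)\ge b$ and $a\,m_1/(m_0,m_1)\ge c$. Conversely, given such data one uses the Remark after Lemma~\ref{basic} to build endpoint $*$-homomorphisms out of $\delta_0,\delta_1$ together with copies of interior evaluations $\delta_t$ of matching total multiplicity, and joins them continuously inside $A_m$. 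The four identifications are then read off by direct endpoint evaluation: for each of $\delta_0,\delta_1,id,\overline{id}$ the restriction $V_i\circ\varphi$ decomposes as an explicit direct sum of $V_0$'s, $V_1$'s and interior representations, and Lemma~\ref{basic} produces the pair $((b',\bar b),(c',\bar c))$, which translates via the isomorphism above into the stated class.

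Under $(m_0,m_1)=1$ and $m\mid p$, the class $[\delta_0]=(1,\bar 0,\bar 0)$ furnishes the $\mathbb{Z}$-summand, and writing a general $(a,\bar b,\bar c)\in\mathbb{Z}\oplus Z(m,p)$ as $n_0[\delta_0]+n_1[\delta_1]+n_2[id]$ reduces to solving the congruences $n_1m_0\equiv b$ and $(n_1+\tfrac{p}{m}n_2)m_1\equiv c$ modulo $p$; the defining condition of $Z(m,p)$ together with $(m_0,m_1)=1$ produces $m_0\mid b$, $m_1\mid c$, and $b/m_0\equiv c/m_1\pmod{p/m}$, which is exactly what is needed for this system to be solvable. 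The two Bockstein operations are then obtained from their definitions as Kasparov products: $\mu_{A_m;p}$ sends the generator of $K_0(A_m)$ to the class of $f\mapsto f(1)\cdot 1_{A_m}$, whose endpoint classes in $K_0(\mathbb{C};\mathbb{Z}_p)$ are $\bar m_0$ and $\bar m_1$; and $\nu_{A_m;p}$ is the connecting map of the six-term sequence above, sending $(\bar b,\bar c)$ to the unique integer $\ell$ with $\tfrac{m}{m_1}c-\tfrac{m}{m_0}b=\ell p$, giving the row vector $(-m/(pm_0),m/(pm_1))$. The main obstacle I expect is pinning down the mod-$p$ component of $\partial$ cleanly: the ``$\in p\mathbb{Z}$'' defining $Z(m,p)$ arises exactly from lifting a $\mathbb{Z}_p$-equation to $\mathbb{Z}$, and this arithmetic slack, coming from the discrepancy between $m/m_0$ and $m/m_1$, is precisely what is later exploited to produce $KK$-elements preserving the Dadarlat--Loring order that nonetheless fail to lift to a $*$-homomorphism.
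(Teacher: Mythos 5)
Your proposal follows essentially the same route as the paper: the same extension $0\to \textrm{SM}_m\to A_m\to \textrm{M}_{m_0}\oplus \textrm{M}_{m_1}\to 0$, the same six-term $KK(\tilde{\textrm{I}}_p,-)$ sequence with the same index map, endpoint evaluation against Lemma~\ref{basic} to identify $[\delta_0],[\delta_1],[id],[\overline{id}]$, and the same congruence analysis for generation, arriving at the same Bockstein formulas. The only cosmetic differences are that you handle the converse inclusion for the positive cone somewhat more explicitly than the paper does, and you read off $\nu_{A_m;p}$ via a connecting-map heuristic whereas the paper composes the generator of $K_1(\textrm{I}_p)$ with $id$ and $\delta_i$ --- both yield $(-\frac{m}{pm_0},\frac{m}{pm_1})$.
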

\begin{proof} Consider the following short exact sequence $$0\rightarrow \textrm{SM}_m\rightarrow \textrm{I}[m_0,m,m_1]\rightarrow \textrm{M}_{m_0}\oplus \textrm{M}_{m_1}\rightarrow
0,\qquad{(*)}$$ it induces the following six term exact sequence for
KK-groups: $$
\xymatrix{ KK(\tilde{I}_{p},\textrm{SM}_m)  \ar[r]  & KK(\tilde{I}_{p}, \textrm{I}[m_{0},m,m_{1}])\ar[r] & KK(\tilde{I}_{p}, \textrm{M}_{m_0}\oplus \textrm{M}_{m_1}) \ar[d]^{\partial}\\
KK^{1}(\tilde{I}_{p}, \textrm{M}_{m_0}\oplus \textrm{M}_{m_1})
\ar[u]^{\partial}  & KK^{1}(\tilde{I}_{p},
\textrm{I}[m_{0},m,m_{1}]) \ar[l] &
KK^{1}(\tilde{I}_{p},\textrm{SM}_m). \ar[l]} $$ We know that
$KK(\tilde{I}_P, \textrm{SM}_m)=K^1(\tilde{I}_p)=0$, see for example
(\cite{JS}, Lemma 3.1), and $$KK(\tilde{I}_p, \textrm{M}_{m_0}\oplus
\textrm{M}_{m_1})=(\mathbb{Z}\oplus\mathbb{Z}_p)\oplus(\mathbb{Z}\oplus\mathbb{Z}_p),
KK^1(\tilde{I}_p, \textrm{SM}_m)=\mathbb{Z}\oplus\mathbb{Z}_p$$ by
Lemma \ref{basic}. Hence by exactness, we obtain that
$$KK(\tilde{I}_{p}, \textrm{I}[m_{0},m,m_{1}])=\textrm{Ker} \partial,\, KK^1(\tilde{I}_{p}, \textrm{I}[m_{0},m,m_{1}])=(\mathbb{Z}\oplus\mathbb{Z}_p)/\textrm{Im}\partial.$$
To calculate $KK(\tilde{I}_{p}, \textrm{I}[m_{0},m,m_{1}])$, it is
enough to figure out the index map $\partial$ on the right hand of
the diagram above.

To do this, we use the identification in Lemma \ref{basic}, assume
$(b', \bar{b}, c', \bar{c})\in (\mathbb{Z}\oplus \mathbb{Z}_p)\oplus
(\mathbb{Z}\oplus(\mathbb{Z}_p))$, since the index map $\partial$ is
induced by the extension $(*)$, one gets that
$$\partial(b',\bar{b},c',\bar{c})=(\dfrac{m}{m_1}c'-\dfrac{m}{m_0}b',\overline{\dfrac{m}{m_1}c-\dfrac{m}{m_0}b}).$$
Hence, $$KK(\tilde{I}_{p},
\textrm{I}[m_{0},m,m_{1}])=\{(b',\bar{b},c',\bar{c})\,|\,\dfrac{m}{m_1}c'-\dfrac{m}{m_0}b'=0,
\dfrac{m}{m_1}c-\dfrac{m}{m_0}b\in p\mathbb{Z}\}.$$ Similarly,
$KK^1(\tilde{I}_p, \textrm{I}[m_0,m,m_1])$ can also be calculated,
but we don't need it here.

Let
$\textrm{H}=\{(b',c')\in\mathbb{Z}\oplus\mathbb{Z}\,|\,\frac{m}{m_1}c'-\frac{m}{m_0}b'=0\}$,
then $\textrm{H}\cong \mathbb{Z}$ via the map $(b',c')\rightarrow
\dfrac{c'+b'}{m_0+m_1}(m_0,m_1)$, so
$(\dfrac{m_0}{(m_0,m_1)},\dfrac{m_1}{(m_0,m_1)})$ corresponds to
$1\in \mathbb{Z}$, then the inverse of this map is
$a\in\mathbb{Z}\rightarrow (a\dfrac{m_0}{(m_0,m_1)},
a\dfrac{m_1}{(m_0,m_1)})$. Therefore, by the identification od Lemma
\ref{basic}, we obtain the positive cone of $K_0(A_m; G_p)$:
$$K^{+}_{0}(A_m;G_p)=\{(a,\bar{b},\bar{c})\in
\mathbb{Z}\oplus\mathbb{Z}(m,p)\,|\,a\dfrac{m_0}{(m_0,m_1)}\geq b,
a\dfrac{m_1}{(m_0,m_1)}\geq c\}.$$

To see the corresponding elements for the homomorphisms $\delta_0$,
$\delta_1$, $id$, and $\overline{id}$, we just need to go through
the identification of Lemma \ref{basic}. This is straightforward,
but easily confused, so we spell out these calculations here for the
convenience of readers.

For $\delta_0: \tilde{I}_p\rightarrow \textrm{I}[m_0,m,m_1]$, apply
the quotient map $\pi: \textrm{I}[m_0,m,m_1]\rightarrow
\textrm{M}_{m_0}\oplus \textrm{M}_{m_1}$, we get
$$\pi(\delta_0(f))=\left(
                                                                   \begin{array}{ccc}
                                                                     V_0(f) &  &  \\
                                                                      & \ddots &  \\
                                                                      &  & V_0(f) \\
                                                                   \end{array}
                                                                 \right)\oplus \left(
                                                                   \begin{array}{ccc}
                                                                     V_0(f) &  &  \\
                                                                      & \ddots &  \\
                                                                      &  & V_0(f) \\
                                                                   \end{array}
                                                                 \right),
$$ $V_0(f)$ repeats $m_0$ times in the first direct summand, and
repeats $m_1$ times in the second direct summand. So by the
identification in Lemma \ref{basic}, this gives us the group element
$(m_0, \bar{0}, m_1, \bar{0})\in
(\mathbb{Z}\oplus\mathbb{Z}_p)\oplus
(\mathbb{Z}\oplus\mathbb{Z}_p)$; Similarly, $\delta_1$ gives us the
group element $(m_0, \bar{m}_0, m_1, \bar{m}_1)\in
(\mathbb{Z}\oplus\mathbb{Z}_p)\oplus
(\mathbb{Z}\oplus\mathbb{Z}_p)$. For the map $id:
\tilde{I}_p\rightarrow
\textrm{M}_{\frac{p}{(p,m)}}(\textrm{I}[m_0,m,m_1])$, apply the
quotient map $\pi$, we get $$\pi(id(f))=\left(
                                                                   \begin{array}{ccc}
                                                                     V_0(f) &  &  \\
                                                                      & \ddots &  \\
                                                                      &  & V_0(f) \\
                                                                   \end{array}
                                                                 \right)\oplus \left(
                                                                   \begin{array}{ccc}
                                                                     V_1(f) &  &  \\
                                                                      & \ddots &  \\
                                                                      &  & V_1(f) \\
                                                                   \end{array}
                                                                 \right),$$ $V_0(f)$ repeats $\dfrac{pm_0}{(p,m)}$ times in the first direct summand, and
$V_1(f)$ repeats $\dfrac{pm_1}{(p,m)}$ times in the second direct
summand. Hence, it gives the element $\frac{p}{(p,m)}(m_0, \bar{0},
m_1, \bar{m}_1)\in (\mathbb{Z}\oplus\mathbb{Z}_p)\oplus
(\mathbb{Z}\oplus\mathbb{Z}_p)$. Similarly, the map $\overline{id}$
corresponds the element $\frac{p}{(p,m)}(m_0, \bar{m}_0, m_1,
\bar{0})\in (\mathbb{Z}\oplus\mathbb{Z}_p)\oplus
(\mathbb{Z}\oplus\mathbb{Z}_p)$.

If we further assume that $(m_0,m_1)=1$, and $m|p$, then
$$[\delta_0]=(1,\bar{0},\bar{0}),[\delta_1]=(1,\bar{m}_{0},\bar{m_1}),[id]=\frac{p}{m}(1,\bar{0},\bar{m_1}),
[\overline{id}]=\frac{p}{m}(1,\bar{m_0},\bar{0}).$$ Moreover,
$(\bar{m}_0,\bar{m}_1)$ is the smallest choice of non-negative pairs
$(b,c)$ such that $\dfrac{m}{m_1}c-\dfrac{m}{m_0}b=0$ (we have
$(m_0,m_1)=1$); by the assumption $m|p$, we know that
$(0,\dfrac{p}{m}\bar{m}_1)$ is the smallest choice of non-negative
integer pairs whose first coordinate is zero and such that
$\dfrac{m}{m_1}c-\dfrac{m}{m_0}b=p$. Then the linear combination of
$(\bar{m}_{0},\bar{m}_{1})$, and $(0,\dfrac{p}{m}\bar{m}_{1})$ with
integer coefficients (could be negative numbers) will be the group
$K_{0}(A_m, \mathbb{Z}/p\mathbb{Z})$. Therefore, for any element
$\gamma=(x,\bar{b},\bar{c})\in K_0(A_m;G_p)$, there exist integers
$l_1$ and $l_2$ (could be negative numbers), such that
$\gamma=(x,l_{1}\bar{m}_0, l_{2}\bar{m}_1)$. Suppose
$$\gamma=c_{1}[\delta_0]+c_{2}[\delta_1]+c_{3}[id], \qquad{(*)}$$ we can always
set $c_2=l_1$, $l_2-l_1=\frac{p}{m}c_3$, since $\gamma\in K_0(A_m;
G_p)$, we have $\dfrac{m}{m_1}l_{2}m_1-\dfrac{m}{m_0}l_{1}m_0=pj$
for some integer $j$. Therefore, $c_3=j$, then $c_1=x-l_2$. So we
can find integers $c_1,c_2,c_3$ which satisfy $(*)$.

In the next, we calculate the Bockstein operations $\mu_{A_m;p}$ and
$\nu_{A_m;p}$. To do so, we need to go through the definition of
these operations on the generators, actual compositions of maps on
generators. Recalling that the generators of $K_0(A_m)$ is the
matrix value projection $h(t)$ such that $h(0)=a_0\otimes
id_{\frac{m}{m_0}}$, $a_0=1\otimes id_{m_0}$; and $h(1)=a_1\otimes
id_{\frac{m}{m_1}}$, $a_1=1\otimes id_{m_1}$. Then the composition
with the map $\delta_1: I_p\rightarrow \mathbb{C}$ gives us the map
$\delta_1(f)h(t)$ from $I_p$ to $A_m$. By our identification, this
map corresponds the element $(\bar{m}_0,\bar{m}_1)\in Z(m,p)$. So
$\mu_{A_m;p}=\left(
               \begin{array}{c}
                 m_0 \\
                 m_1 \\
               \end{array}
             \right).
$

For the Bockstein operation $\nu_{A_m;p}$, note that the generator
of $K_1(A_m)$
$(K_1(I_p))$ is the matrix value function $$g(t)=\left(
                                                               \begin{array}{cccc}
                                                                 e^{2\pi it} &  &  &  \\
                                                                  & 1 &  &  \\
                                                                 &  & \ddots &  \\
                                                                  &  &  & 1 \\
                                                               \end{array}
                                                             \right),
$$ then the composition of the map $1\rightarrow g(t)$ with $id$
gives the element $\overline{\frac{m}{(p,m)}}\in K_1(A_m)$, and with
$\delta_i, i=0,1$ gives zero (then the composition with
$\overline{id}$ gives $-\overline{\frac{m}{(p,m)}}$). Hence
$\nu_{A_m;p}=(-\dfrac{m}{pm_0}, \dfrac{m}{pm_1})$.

\end{proof}

Although $[\delta_0],[\delta_1]$ and $[id]$ can already generate the
group $K_{0}(A_m; G_p)$, but it is not true that the positive cone
is the linear span of these three elements with non-negative integer
coefficients, for example, the element $[\overline{id}]$ can not be
written as a linear combination of these three with non-negative
integer coefficients. For our purpose, the following stronger
version of generators of the positive cone is needed.

\begin{lem}\label{lem:generator of cone} Given a generalized dimension drop algebra
$A_m=\textrm{I}[m_0,m,$ $m_1]$ with $(m_0,m_1)=1$, and any positive
integer $p$ with $m$ divides $p$, then any element of the
Dadarlat-Loring positive cone of $K_{0}(A_m; G_{p})$ can be written
as a linear combination of $[\delta_0],[\delta_1],[id]$, and
$[\overline{id}]$ with non-negative integer coefficients.
\end{lem}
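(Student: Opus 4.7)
The plan is an explicit algorithmic decomposition. Fix $(a, \bar b, \bar c) \in K_0^+(A_m; G_p)$. From Theorem \ref{calcu} the classes $[\delta_0]$, $[\delta_1]$, $[id]$ generate $K_0(A_m;G_p)$, so inspecting the second and third coordinates of an arbitrary $\mathbb{Z}$-linear combination shows that $\bar b \in \langle \bar m_0\rangle$ and $\bar c \in \langle \bar m_1\rangle$ inside $\mathbb{Z}_p$. Since $m_0 \mid p$ and $m_1 \mid p$, these cyclic subgroups have orders $p/m_0$ and $p/m_1$, so there exist unique $B \in \{0, 1, \dots, p/m_0 - 1\}$ and $C \in \{0, 1, \dots, p/m_1 - 1\}$ with $\bar b = B\bar m_0$ and $\bar c = C\bar m_1$. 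With this choice, $b = Bm_0$ and $c = Cm_1$ are the minimal non-negative integer representatives of $\bar b$ and $\bar c$, each strictly less than $p$.

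Two consequences drop out. First, the positivity inequalities $am_0 \geq b$ and $am_1 \geq c$ from the Dadarlat--Loring cone reduce to $a \geq B$ and $a \geq C$. Second, the compatibility condition defining $Z(m,p)$, namely $\frac{m}{m_1}c - \frac{m}{m_0}b \in p\mathbb{Z}$, becomes $m(C-B) \in p\mathbb{Z}$, i.e.\ $(p/m) \mid (C - B)$. This divisibility is exactly what I need: the extra first-coordinate weight carried by the generators $[id]$ and $[\overline{id}]$ is precisely $p/m$.

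I now split into two symmetric cases according to the sign of $C - B$. If $B \leq C$, set $\beta = B$, $\delta = 0$, $\gamma = m(C - B)/p$, $\alpha = a - C$; if $B > C$, set $\beta = C$, $\gamma = 0$, $\delta = m(B - C)/p$, $\alpha = a - B$. In either case $\gamma$ or $\delta$ is a non-negative integer by the divisibility $(p/m) \mid (C - B)$, and $\alpha \geq 0$ by $a \geq \max(B,C)$. A direct computation using $[\delta_0] = (1,\bar 0,\bar 0)$, $[\delta_1] = (1,\bar m_0, \bar m_1)$, $[id] = (p/m)(1, \bar 0, \bar m_1)$, and $[\overline{id}] = (p/m)(1, \bar m_0, \bar 0)$ then verifies $\alpha[\delta_0] + \beta[\delta_1] + \gamma[id] + \delta[\overline{id}] = (a, \bar b, \bar c)$ in each case.

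The only genuine observation is the alignment noted in the second paragraph: the $Z(m,p)$-constraint $(p/m) \mid (C - B)$ is exactly what allows the mismatch between the $\bar m_0$- and $\bar m_1$-components to be absorbed by $[id]$ and $[\overline{id}]$. Once this is in hand the choice of coefficients is forced, and all non-negativity checks follow directly from the defining inequalities of the positive cone.
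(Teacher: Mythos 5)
Your proof is correct and follows essentially the same route as the paper's: write the torsion part as $(B\bar m_0, C\bar m_1)$, observe that the $Z(m,p)$ condition forces $(p/m)\mid(C-B)$, and split into the two cases $B\le C$ and $B>C$, absorbing the discrepancy with $[id]$ or $[\overline{id}]$ respectively; your coefficients $(\alpha,\beta,\gamma,\delta)$ coincide with the paper's $(c_1,c_2,c_3,c_4)$. The only cosmetic difference is that you pin down the representatives $B,C$ canonically and justify $\bar b\in\langle\bar m_0\rangle$, $\bar c\in\langle\bar m_1\rangle$ via the generating set from Theorem \ref{calcu}, which the paper asserts more informally.
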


\begin{proof} Recalling the calculation of the K-theory with
coefficient, we know that: $$K_{0}(A_m,
\mathbb{Z}\oplus\mathbb{Z}/p\mathbb{Z})=\mathbb{Z}\oplus\{(\bar{b},\bar{c})\in
\mathbb{Z}/p\mathbb{Z}\oplus
\mathbb{Z}/p\mathbb{Z}\,|\,\frac{m}{m_1}c-\frac{m}{m_0}b\in
p\mathbb{Z} \},$$ and
$$[\delta_0]=(1,\bar{0},\bar{0}),[\delta_1]=(1,\bar{m}_{0},\bar{m_1}),[id]=\frac{p}{m}(1,\bar{0},\bar{m_1}), [\overline{id}]=\frac{p}{m}(1,\bar{m_0},\bar{0}).$$
Since $(\bar{m}_0,\bar{m}_1)$ is the smallest choice of non-negative
pairs $(b,c)$ such that $\dfrac{m}{m_1}c-\dfrac{m}{m_0}b=0$ (we have
$(m_0,m_1)=1$); by the assumption $m|p$, we know that
$(0,\dfrac{p}{m}\bar{m}_1)$ is the smallest choice of non-negative
integer pairs whose first coordinate is zero and such that
$\dfrac{m}{m_1}c-\dfrac{m}{m_0}b=p$, similarly,
$(\dfrac{p}{m}\bar{m}_0,0)$ is the smallest choice of non-negative
integer pairs whose second coordinate is zero and such that
$\dfrac{m}{m_1}c-\dfrac{m}{m_0}b=p$. Then the linear combination of
$(\bar{m}_{0},\bar{m}_{1}), (\dfrac{p}{m}\bar{m}_{0},0)$, and
$(0,\dfrac{p}{m}\bar{m}_{1})$ with non-negative integer coefficients
will be the group $K_{0}(A_m, \mathbb{Z}/p\mathbb{Z})$.

Hence, for any element $\gamma=(x,\bar{b},\bar{c})$ in the positive
cone of $K_{0}(A_m, \mathbb{Z}\oplus\mathbb{Z}/p\mathbb{Z})$, there
exist non-negative integers $l_1$ and $l_2$ such that $\gamma=(x,
l_{1}\bar{m}_{0},$
$ l_{2}\bar{m}_{1})$. In the next, we try to write
$\gamma$ as the linear combination of $[\delta_0],[\delta_1],[id]$,
and $[\overline{id}]$ with positive integer coefficients.

 Suppose
$$\gamma=c_{1}[\delta_0]+c_{2}[\delta_1]+c_{3}[id]+c_{4}[\overline{id}],$$
then one obtains that
\begin{align*}
&x=c_{1}+c_{2}+c_{3}\frac{p}{m}+c_{4}\frac{p}{m},\\
&l_{1}=c_{2}+c_{4}\frac{p}{m}, \hspace{2.0cm}\qquad(**)\\
&l_{2}=c_{2}+c_{3}\frac{p}{m}.
\end{align*}

In case 1: assume that $l_{1}\geq l_{2}$, then
$\dfrac{p}{m}c_{4}=\dfrac{p}{m}c_{3}+l_{1}-l_{2}$, let $c_{3}=0$,
one gets $\dfrac{p}{m}c_{4}=l_{1}-l_{2}$. Since
$\dfrac{m}{m_1}l_{2}m_{1}-\dfrac{m}{m_0}l_{1}m_{0}\in p\mathbb{Z}$,
i.e., $l_{2}-l_{1}=\dfrac{p}{m}j$ for some integer $j\leq 0$. Then
$c_{4}=-j\geq 0$, therefore, $c_{2}=l_{2}=l_{1}+j\dfrac{p}{m}$. This
gives us
$c_{1}=x-l_{2}-\dfrac{p}{m}(-j)=x-l_{2}+l_{2}-l_{1}=x-l_{1}$.
Because $\gamma$ is in the positive cone, we know that $x\geq
l_{1}$, namely, $c_1$ is non-negative.

In case 2: assume that $l_{2}\geq l_{1}$, then
$\dfrac{p}{m}c_{3}=\frac{p}{m}c_{4}+l_{2}-l_{1}$, let $c_{4}=0$, one
gets that $\dfrac{p}{m}c_{3}=l_{2}-l_{1}$. Since
$\dfrac{m}{m_1}l_{2}m_{1}-\dfrac{m}{m_0}l_{1}m_{0}\in p\mathbb{Z}$,
namely, $l_{2}-l_{1}=\dfrac{p}{m}j$ for some integer $j\geq 0$. Then
$c_{3}=j\geq 0$, therefore, $c_{2}=l_{1}=l_{2}-\dfrac{p}{m}j$, this
gives us $c_{1}=x-l_{1}-\dfrac{p}{m}j=x-l_{2}$. Because $\gamma$ is
in the positive cone, we know that $x\geq l_{2}$, namely, $c_1$ is
non-negative.

In all cases, we can always find non-negative integer solution for
the system $(**)$ above. Then we are done.
\end{proof}

\section{Morphisms between K-theory with coefficient}

In this section, we will look at the morphisms between K-theory with
coefficient of two generalized dimension drop interval algebras,
also preserving the Bockstein operations. Given
$A_m=\textrm{I}[m_0,m,m_1]$, and $B_n=\textrm{I}[m_0,n,m_1]$, recall
Lemma \ref{comp} and the notation there, we investigate the
structure of
$\textbf{Hom}(\textbf{K}(\textrm{A}_m;p),\textbf{K}(\textrm{B}_n;p)$.
For each KK-element $\alpha\in KK(A_m,B_n)$, the induced triple
$$\Gamma(\alpha;p)=(\textrm{K}_0(\alpha),\textrm{K}_0(\alpha;\mathbb{Z}_p),\textrm{K}_1(\alpha))$$
lives in
$\textbf{Hom}(\textbf{K}(\textrm{A}_m;p),\textbf{K}(\textrm{B}_n;p)$.
Therefore, to analyze the the structure of
$\textbf{Hom}(\textbf{K}(\textrm{A}_m;p),\textbf{K}(\textrm{B}_n;p)$
is crucial in the following two senses: first, it is useful to
figure out the condition under which a KK-element can preserve the
Dadarlat-Loring order structure; second, more importantly, this
analysis indicate the possible candidates of KK-elements which
preserve the Dadarlat-Loring order structure, but fail to be
representable by $*$-homomorphisms.

To analyze the structure of
$\textbf{Hom}(\textbf{K}(\textrm{A}_m;p),\textbf{K}(\textrm{B}_n;p)$
means to give a general description of how an element looks like. A
direct guess one could make is by direct calculation for a general
element here. However, the equations set up from Lemma \ref{comp}
for a general element in
$\textbf{Hom}(\textbf{K}(\textrm{A}_m;p),\textbf{K}(\textrm{B}_n;p)$
is quite complicate, since we are working in mod $p$ setting, to
solve such equations is also difficult. Instead, we first give
description of special elements of the form $\Gamma=(0,\varphi,0)$;
and then construct concrete elements $(x,\phi,0)$ and $(0,\psi,y)$
for given $K_0$-multiplicity $x$ and $K_1$-multiplicity $y$. Then we
can have a description for general elements in
$\textbf{Hom}(\textbf{K}(\textrm{A}_m;p),\textbf{K}(\textrm{B}_n;p)$.
All these arguments become much easier for the classical dimension
drop interval algebras due to the fact $m_0=m_1=1$ (equivalently the
equal dimension drop).

\begin{lem}\label{torsion} Given $A_m=I[m_0,m,m_1]$ and $B_n=I[m_0,n,m_1]$, and any positive integer $p$ with $m|p$. For any induced triple $\gamma=(x, \varphi, y)$, if $\gamma=(0, \varphi,
0)$, then $\varphi=d\left(
                               \begin{array}{cc}
                                 -m_1m_0 & m_0m_0 \\
                                 -m_1m_1 & m_0m_1 \\
                               \end{array}
                             \right)
$ for some integer d with $0\leq d<\dfrac{m}{m_0m_1}$.
\end{lem}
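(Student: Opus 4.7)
The plan is to exploit exactness of the Bockstein six-term sequence to turn the two vanishing hypotheses into a factorization of $\varphi$ through a small cyclic group, and then parametrize the resulting homomorphisms explicitly. All the ingredients are supplied by Theorem \ref{calcu}: $K_0(A_m)=\mathbb{Z}$, $K_1(A_m)=\mathbb{Z}/(m/(m_0m_1))$, and $K_0(A_m;\mathbb{Z}_p)\cong Z(m,p)\subset\mathbb{Z}_p\oplus\mathbb{Z}_p$; the Bockstein $\mu_{A_m;p}$ sends a generator of $K_0(A_m)$ to $(\bar m_0,\bar m_1)$; and $\nu_{A_m;p}(\bar b,\bar c)=\tfrac{1}{p}(\tfrac{m}{m_1}c-\tfrac{m}{m_0}b)$. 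The analogous identifications hold on the $B_n$ side.

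Step one uses $x=0$. From the diagram of Lemma \ref{comp}, $\varphi\circ\mu_{A_m;p}=\mu_{B_n;p}\circ x=0$, so $\varphi$ annihilates the cyclic subgroup $\operatorname{Im}\mu_{A_m;p}=\langle(\bar m_0,\bar m_1)\rangle$. By exactness at the middle of the Bockstein sequence, this subgroup equals $\ker\nu_{A_m;p}$, so $\varphi$ descends to a well-defined map $\bar\varphi\colon K_1(A_m)\to Z(n,p)$ with $\varphi=\bar\varphi\circ\nu_{A_m;p}$. Since $m\mid p$, multiplication by $p$ is zero on $K_1(A_m)$, hence $\nu_{A_m;p}$ is surjective and the factorization is canonical.

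Step two uses $y=0$. The same diagram yields $\nu_{B_n;p}\circ\varphi=y\circ\nu_{A_m;p}=0$, so the image of $\varphi$ lies in $\ker\nu_{B_n;p}=\operatorname{Im}\mu_{B_n;p}$, the cyclic subgroup of order $p$ in $Z(n,p)$ generated by $(\bar m_0,\bar m_1)$. Consequently $\bar\varphi$ is a homomorphism $\mathbb{Z}/(m/(m_0m_1))\to\mathbb{Z}_p$. Since $m\mid p$ forces $(m/(m_0m_1))\mid p$, the group of such homomorphisms is cyclic of order $m/(m_0m_1)$, parametrized by an integer $d\in\{0,1,\ldots,m/(m_0m_1)-1\}$ with $\bar\varphi(1)=d\cdot(pm_0m_1/m)\in\mathbb{Z}_p$.

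The final step is to unwind the formula in terms of $(\bar b,\bar c)$. A direct computation gives $\nu_{A_m;p}(\bar b,\bar c)=\tfrac{m}{pm_0m_1}(m_0c-m_1b)$; chasing this through $\bar\varphi$ and then through the inclusion $\mathbb{Z}_p\cdot(\bar m_0,\bar m_1)\hookrightarrow Z(n,p)$ produces $\varphi(\bar b,\bar c)=d(m_0c-m_1b)(\bar m_0,\bar m_1)$, which is exactly $d\begin{pmatrix}-m_1m_0 & m_0m_0\\ -m_1m_1 & m_0m_1\end{pmatrix}\binom{\bar b}{\bar c}$. The only real obstacle in the argument is bookkeeping: keeping three cyclic identifications straight — $\operatorname{Im}\mu_{A_m;p}\cong\mathbb{Z}_p$, $Z(m,p)/\operatorname{Im}\mu_{A_m;p}\cong K_1(A_m)$ via $\nu_{A_m;p}$, and $\operatorname{Im}\mu_{B_n;p}\cong\mathbb{Z}_p$ — so that the parameter $d$ lands in the stated fundamental domain $[0,m/(m_0m_1))$.
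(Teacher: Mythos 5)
Your proof is correct, and it takes a genuinely different --- and arguably cleaner --- route than the paper's. The paper works entirely in coordinates: it writes $\varphi=(\varphi_{ij})$, translates the two commuting squares into congruences, and solves them by repeated use of a B\'ezout relation $\beta_0m_0+\beta_1m_1=1$, first extracting $\varphi_{11}=-d_1m_1$, $\varphi_{12}=d_1m_0$ (and similarly for the second row), then $d_1=dm_0$, $d_2=dm_1$; the normalization $0\le d<\dfrac{m}{m_0m_1}$ is obtained at the very end by checking when the resulting matrix acts as zero on the generators $(\bar m_0,\bar m_1)$ and $(\bar 0,\frac{p}{m}\bar m_1)$ of $Z(m,p)$. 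You instead invoke exactness of the Bockstein sequence twice: $x=0$ forces $\varphi$ to annihilate $\operatorname{Im}\mu_{A_m;p}=\ker\nu_{A_m;p}$ and hence to factor through $\operatorname{Im}\nu_{A_m;p}=K_1(A_m)\cong\mathbb{Z}_{m/(m_0m_1)}$ (surjectivity of $\nu_{A_m;p}$ coming from $m\mid p$), while $y=0$ forces the image into $\ker\nu_{B_n;p}=\operatorname{Im}\mu_{B_n;p}=\langle(\bar m_0,\bar m_1)\rangle\cong\mathbb{Z}_p$; the answer is then just $\operatorname{Hom}(\mathbb{Z}_{m/(m_0m_1)},\mathbb{Z}_p)$, cyclic of order $\frac{m}{m_0m_1}$, and unwinding the identifications reproduces the stated matrix (I checked the final computation $\varphi(\bar b,\bar c)=d(m_0c-m_1b)(\bar m_0,\bar m_1)$ and it is well defined and agrees with $d\bigl(\begin{smallmatrix}-m_1m_0 & m_0m_0\\ -m_1m_1 & m_0m_1\end{smallmatrix}\bigr)$). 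Your argument buys a conceptual explanation of why the kernel-of-multiplicities part of $\textbf{Hom}(\textbf{K}(A_m;p),\textbf{K}(B_n;p))$ is cyclic of order exactly $\frac{m}{m_0m_1}$, and it avoids the paper's somewhat delicate ``morphism in the mod $p$ setting'' manipulations. Two inputs you use that the lemma's statement does not make explicit, but which are standing assumptions or standard facts in the paper: the hypothesis $(m_0,m_1)=1$ (needed both for $(\bar m_0,\bar m_1)$ to have order $p$ and for $K_1(A_m)\cong\mathbb{Z}_{m/(m_0m_1)}$), and the value of $K_1(A_m)$ itself, which Theorem \ref{calcu} defers but which follows from the same six-term sequence and is consistent with the Ext term in Proposition 4.6.
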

\begin{proof} Recalling from Lemma \ref{comp}, $\gamma$ should fit
the following commutative
diagram:$$\xymatrix{K_{0}(A_m)\ar[r]^{\mu_{A_m;p}} \ar[d]^{\times x} &K_{0}(A_m,\mathbb{Z}_p) \ar[r]^{\nu_{A_m;p}} \ar[d]^{\varphi} &K_{1}(A_m)\ar[d]^{\times y}\\
K_{0}(B_n)\ar[r]^{\mu_{B_n;p}}&K_{0}(B_n,\mathbb{Z}_p)\ar[r]^{\nu_{B_n;p}}&K_1(B_n)
.}$$ Suppose $\gamma=(0, \varphi, 0)$, then from the first (left)
commuting square above, we get $$\varphi\circ \left(
                                               \begin{array}{c}
                                                 m_0 \\
                                                 m_1 \\
                                               \end{array}
                                             \right)(1)=0,
                                             \hspace{2.0cm}\qquad{(1)}
$$ and from the second (right) commuting square, we get
$$\nu_{B_n;p}\circ\varphi\left(
                          \begin{array}{c}
                            \bar{b} \\
                            \bar{c} \\
                          \end{array}
                        \right)=0 \hspace{2.0cm}\qquad{(2)}
$$ for any $(\bar{b},\bar{c})\in Z(m,p)$.

Assume $\varphi=\left(
                  \begin{array}{cc}
                    \varphi_{11} & \varphi_{12} \\
                    \varphi_{21} & \varphi_{22} \\
                  \end{array}
                \right)
$, from (1), we get $\varphi\left(
                           \begin{array}{c}
                             \bar{m}_0 \\
                             \bar{m}_1 \\
                           \end{array}
                         \right)=0,
$ namely, \begin{align} &\varphi_{11}m_0+\varphi_{12}m_1=pj\\
&\varphi_{21}m_0+\varphi_{22}m_1=pi.
\end{align} Apply (2) on the element $(\bar{0},\dfrac{p}{m}\bar{m_1})$, we obtain
$$(-\dfrac{n}{pm_0}, \dfrac{n}{pm_1})\left(
                                      \begin{array}{c}
                                        \varphi_{12}\dfrac{p}{m}\bar{m}_1 \\
                                        \varphi_{22}\dfrac{p}{m}\bar{m}_1 \\
                                      \end{array}
                                    \right)=\varphi_{22}\dfrac{n}{m}-\varphi_{12}\dfrac{nm_1}{mm_0}\in p\mathbb{Z}.
$$ Since $(m_0,m_1)=1$, there exist $\beta_0, \beta_1\in\mathbb{Z}$,
such that $\beta_0m_0+\beta_1m_1=1$. So we have
$pj\beta_0m_0+pj\beta_1m_1=pj$, this equation subtract (4.1), we get
$$(\varphi_{11}-pj\beta_0)m_0+(\varphi_{12}-pj\beta_1)m_1)=0. \hspace{2.0cm} (3)$$ Since $(m_0,m_1)=1$, one has
that $\varphi_{11}-pj\beta_0=d_1m_1$, and
$pj\beta_1-\varphi_{12}=d_1m_0$ for some integer $d_1$. Because
$\varphi$ is a morphism in the mod $p$ setting, so we have
$\varphi_{11}=-d_1m_1, \varphi_{12}=d_1m_0$. Similarly, the the same
argument applies on (4.2), we have that $\varphi_{21}=-d_2m_1,
\varphi_{22}=d_2m_0$. Combine with (3), one gets
$\dfrac{d_2nm_0}{m}-\dfrac{d_1nm_1m_0}{mm_0}=pk$ for some integer
$k$. Then $d_2nm_0-d_1nm_1=pkm$, use the same argument above with
the pair $(\beta_0,\beta_1)$, we obtain that
$$(d_2n-\beta_0pkm)m_0-(d_1n+\beta_1pkm)m_1=0.$$ Still note that
$\varphi$ is a morphism in the mod $p$ setting, we have that
$d_2n=rm_1$ and $d_1n=rm_0$ for some integer $r$, from here, we get
$d_2m_0=d_1m_1$. Therefore $d_2=dm_1$ and $d_1=dm_0$ for some
integer $d$. Hence, $\varphi=d\left(
                               \begin{array}{cc}
                                 -m_1m_0 & m_0m_0 \\
                                 -m_1m_1 & m_0m_1 \\
                               \end{array}
                             \right).
$

Suppose that $\varphi=d\left(
                               \begin{array}{cc}
                                 -m_1m_0 & m_0m_0 \\
                                 -m_1m_1 & m_0m_1 \\
                               \end{array}
                             \right)=0$, then this happens if and
                             only if $\varphi\left(
                                               \begin{array}{c}
                                                 \bar{m}_0 \\
                                                 \bar{m}_1 \\
                                               \end{array}
                                             \right)=0,
                             $ and $\varphi\left(
                                               \begin{array}{c}
                                                 \bar{0} \\
                                                 \dfrac{p}{m}\bar{m}_1 \\
                                               \end{array}
                                             \right)=0$. The first
                                             equation is
                                             automatically satisfied
                                             since $\varphi$ has
                                             this special form. For
                                             the second one, it
                                             means that
                                             $\dfrac{dm_0m_0pm_1}{m}\in p\mathbb{Z}$
                                             and
                                             $\dfrac{dm_0m_1pm_1}{m}\in
                                             p\mathbb{Z}$. This forces $d$ to be a multiple of $\dfrac{m}{m_0m_1}$. Hence we
                                             can always choose $0\leq
                                             d<\dfrac{m}{m_0m_1}$.
\end{proof}

\begin{lem} Given $K_0$-multiplicity $x$ and $p$ with $m$ dividing $p$, there exists a morphism
$\phi$ between $K_0(A_m,\mathbb{Z}_p)$ and $K_0(B_n,\mathbb{Z}_p)$,
such that $(x,\phi,0)$ is a triple in
$\textbf{Hom}(\textbf{K}(\textrm{A}_m;p),\textbf{K}(\textrm{B}_n;p)$.

\end{lem}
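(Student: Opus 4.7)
The plan is to realize $\phi$ as the induced map $K_0(\alpha;\mathbb{Z}_p)$ of a $KK$-element $\alpha\in KK(A_m,B_n)$ built from the basic $*$-homomorphisms $\delta_0,\delta_1$ introduced in Section~2. With this approach the commutativity of the Bockstein diagram from Lemma~\ref{comp} is automatic (it follows from the associativity of the Kasparov product), and we only have to arrange the prescribed multiplicities $x$ and $0$.

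First I would compute the $K_0$- and $K_1$-multiplicities of $[\delta_i]$, where $\delta_i:A_m\to B_n\otimes \textrm{M}_{m_i}$ is the block-diagonal embedding $f\mapsto 1_{B_n}\otimes V_i(f)$. This factors as $A_m\xrightarrow{V_i}\textrm{M}_{m_i}\xrightarrow{\iota} B_n\otimes \textrm{M}_{m_i}$, where $V_i$ is evaluation at the endpoint $i$ extracting the $m_i\times m_i$ corner and $\iota(a)=1_{B_n}\otimes a$. Since $V_i(1_{A_m})=I_{m_i}$, tracking the unit through the factorization and applying the stability isomorphism $K_0(B_n\otimes\textrm{M}_{m_i})\cong K_0(B_n)$ yields $K_0([\delta_i])=m_i$; the factorization through $\textrm{M}_{m_i}$ (for which $K_1=0$) gives $K_1([\delta_i])=0$.

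Since $(m_0,m_1)=1$, B\'ezout's identity supplies integers $a,b\in\mathbb{Z}$ (possibly negative) with $am_0+bm_1=x$. Set
\[
\alpha:=a[\delta_0]+b[\delta_1]\in KK(A_m,B_n);
\]
bilinearity of the Kasparov product then gives $K_0(\alpha)=am_0+bm_1=x$ and $K_1(\alpha)=0$. Taking $\phi:=K_0(\alpha;\mathbb{Z}_p)$, Lemma~\ref{comp} guarantees that the triple
\[
\Gamma(\alpha;p)=(K_0(\alpha),K_0(\alpha;\mathbb{Z}_p),K_1(\alpha))=(x,\phi,0)
\]
lies in $\textbf{Hom}(\textbf{K}(\textrm{A}_m;p),\textbf{K}(\textrm{B}_n;p))$, as required.

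The step requiring the most care is justifying the factorization $\delta_i=\iota\circ V_i$ at the C*-algebra level. The hypothesis $(m_0,m_1)=1$ is essential here: it ensures that the constant section $1_{B_n}\otimes V_i(f)$ satisfies the boundary condition of $B_n\otimes \textrm{M}_{m_i}$ at the opposite endpoint, using the tensor decomposition $\textrm{M}_n=\textrm{M}_{m_0}\otimes \textrm{M}_{m_1}\otimes \textrm{M}_{n/(m_0m_1)}$ available under this coprimality assumption. Once this is in hand, the stability calculation for $K_0$ and the vanishing of $K_1$ are routine.
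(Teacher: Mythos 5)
Your proof is correct, but it takes a genuinely different route from the paper. The paper works entirely at the level of the Bockstein diagram: it writes $\phi$ as an unknown $2\times 2$ matrix, translates the two commuting squares of Lemma~\ref{comp} into congruences mod $p$ on the generators $(\bar m_0,\bar m_1)$ and $(\bar 0,\tfrac{p}{m}\bar m_1)$ of $K_0(A_m;\mathbb{Z}_p)$, and then exhibits the explicit solution $\phi_{ij}=x m_{i-1}\beta_{j-1}$ coming from a B\'ezout relation $\beta_0 m_0+\beta_1 m_1=1$. You instead realize the triple as $\Gamma(\alpha;p)$ for the concrete $KK$-element $\alpha=a[\delta_0]+b[\delta_1]$ with $am_0+bm_1=x$, so that commutativity with the Bockstein operations is automatic from Lemma~\ref{comp} (associativity of the Kasparov product) and the only computation needed is $K_0(\delta_i)=m_i$, $K_1(\delta_i)=0$, which you justify correctly via the factorization through $\mathrm{M}_{m_i}$; this matches what the paper later records in Lemma~\ref{induce map}. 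Your argument is cleaner for the bare existence statement and, with the choice $a=x\beta_0$, $b=x\beta_1$, reproduces exactly the paper's matrix once the middle parts of $\Gamma(\delta_i;p)$ are known. The one thing your proof does not deliver, and which the paper's direct computation does, is the explicit formula for $\phi$, which is needed verbatim in the structure theorem (Theorem~\ref{struc}) where $\sigma$ is assembled from the entries of $\phi$ and $\psi$; if you want your version to feed into that result you should also record the induced map of $\delta_0,\delta_1$ on $K_0(\,\cdot\,;\mathbb{Z}_p)$, not just on $K_0$ and $K_1$.
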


\begin{proof} Suppose $\phi=\left(
                              \begin{array}{cc}
                                \phi_{11} & \phi_{12} \\
                                \phi_{21} & \phi_{22} \\
                              \end{array}
                            \right),
$ as required, we need \begin{align} &\phi\left(
                                           \begin{array}{c}
                                             \bar{m}_0 \\
                                             \bar{m}_1 \\
                                           \end{array}
                                         \right)=\left(
                                                   \begin{array}{c}
                                                     x\bar{m}_0 \\
                                                     x\bar{m}_1 \\
                                                   \end{array}
                                                 \right)
                                         \\
&(-\dfrac{n}{pm_0},\dfrac{n}{pm_1})\phi\left(
                                    \begin{array}{c}
                                      \bar{0} \\
                                      \dfrac{p}{m}\bar{m}_1 \\
                                    \end{array}
                                  \right)=0.
\end{align} Expand these two equations, one get that \begin{align} &\phi_{11}m_0+\phi_{12}m_1\equiv xm_0 \,\text{mod}\,
p\\
&\phi_{21}m_0+\phi_{22}m_1\equiv xm_1 \,\text{mod}\, p,
\end{align} and \begin{align}&
\dfrac{n}{pm_1}\phi_{22}\dfrac{pm_1}{m}-\dfrac{n}{pm_0}\phi_{12}\dfrac{pm_1}{m}\equiv
0\,\text{mod}\,p\\
&\dfrac{n}{pm_1}\phi_{21}\dfrac{pm_0}{m}-\dfrac{n}{pm_0}\phi_{11}\dfrac{pm_0}{m}\equiv
0\,\text{mod}\,p.
\end{align} Namely, \begin{align}
&\dfrac{nm_1}{m}(\dfrac{\phi_{22}}{m_1}-\dfrac{\phi_{12}}{m_0})\equiv
0 \,\text{mod}\, p\\
&\dfrac{nm_0}{m}(\dfrac{\phi_{21}}{m_1}-\dfrac{\phi_{11}}{m_0})\equiv
0 \,\text{mod}\, p.
\end{align} Since we have $(m_0,m_1)=1$, there exists integer $\beta_0\geq
0$, and $\beta_1\leq 0$, such that $\beta_0m_0+\beta_1m_1=1$. Then
\begin{align}&xm_0\beta_0m_0+xm_0\beta_1m_1=xm_0\\
&xm_1\beta_0m_0+xm_1\beta_1m_1=xm_1.
\end{align}
Set $\phi_{11}=xm_0\beta_0$, $\phi_{12}=xm_0\beta_1$,
$\phi_{21}=xm_1\beta_0$, and $\phi_{22}=xm_1\beta_1$, then with
these data, (4.9) and (4.10) are satisfied.
\end{proof}

\begin{lem} Given $K_1$-multiplicity $y$ and $p$ with $m$ dividing $p$, there exists a morphism
$\psi$ between $K_0(A_m,\mathbb{Z}_p)$ and $K_0(B_n,\mathbb{Z}_p)$,
such that $(0,\psi,y)$ is a triple in
$\textbf{Hom}(\textbf{K}(\textrm{A}_m;p),\textbf{K}(\textrm{B}_n;p)$.

\end{lem}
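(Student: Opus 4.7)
The plan parallels the preceding lemma with the roles of $K_0$ and $K_1$ interchanged. I would write $\psi$ as a $2\times 2$ matrix $(\psi_{ij})$ with entries in $\mathbb{Z}_p$ and translate the two squares of the commutative diagram from Lemma \ref{comp} into congruence conditions on the $\psi_{ij}$.

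First I would handle the left square: since the $K_0$-multiplicity is zero, the requirement $\psi\circ\mu_{A_m;p}=0$ unfolds to $\psi(\bar{m}_0,\bar{m}_1)=(\bar{0},\bar{0})$, that is $\psi_{11}m_0+\psi_{12}m_1\equiv 0\pmod p$ and $\psi_{21}m_0+\psi_{22}m_1\equiv 0\pmod p$. Because $(m_0,m_1)=1$, the analysis of equation (3) in the proof of Lemma \ref{torsion} applies verbatim and forces $(\psi_{11},\psi_{12})=(-cm_1,cm_0)$ and $(\psi_{21},\psi_{22})=(-dm_1,dm_0)$ in $\mathbb{Z}_p$ for some integers $c,d$.

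Next I would analyze the right square $\nu_{B_n;p}\circ\psi=y\cdot\nu_{A_m;p}$ by evaluating on the generator $(\bar{0},\frac{p}{m}\bar{m}_1)$ of $K_0(A_m;\mathbb{Z}_p)$, whose $\nu_{A_m;p}$-image is the generator $1\in K_1(A_m)$. Plugging in the parameterization above and using the formulas $\nu_{A_m;p}=(-\frac{m}{pm_0},\frac{m}{pm_1})$ and $\nu_{B_n;p}=(-\frac{n}{pm_0},\frac{n}{pm_1})$ from Theorem \ref{calcu}, a direct computation collapses the condition to the single equation $\frac{n}{m}(dm_0-cm_1)\equiv y$ in $K_1(B_n)$. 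Testing on the companion generator $(\frac{p}{m}\bar{m}_0,\bar{0})$ produces an equivalent equation, so no additional constraint appears. Since $(m_0,m_1)=1$, Bezout lets $dm_0-cm_1$ take any integer value; selecting $\beta_0,\beta_1\in\mathbb{Z}$ with $\beta_0m_0+\beta_1m_1=1$ and scaling gives an explicit $c,d$ realizing the prescribed $y$.

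The main obstacle is the arithmetic bookkeeping in this last step: one must identify $K_1(B_n)$ as the cyclic group of order $n/m_0m_1$, interpret the rational-looking expression $\frac{n}{m}(dm_0-cm_1)$ as a well-defined element of it, and confirm that every admissible $y$ occurring as the $K_1$-component of a triple in $\textbf{Hom}(\textbf{K}(A_m;p),\textbf{K}(B_n;p))$ is attainable. The hypothesis $m\mid p$ is exactly what keeps the solvability compatible with working modulo $p$, after which the existence of $(\psi_{ij})$ reduces to a standard Bezout step. By Lemma \ref{torsion}, the full family of admissible $\psi$ is then obtained from one such solution by adding a torsion term parameterized by the integer $d$ with $0\leq d<\frac{m}{m_0m_1}$.
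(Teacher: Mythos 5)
Your proposal is correct and follows essentially the same route as the paper: translating the two commuting squares into congruences, parameterizing the kernel of the left-square condition via $(m_0,m_1)=1$, and then solving $\frac{n}{m}(dm_0-cm_1)\equiv y$ with a Bezout pair scaled by the integer $\frac{my}{n}$ — which reproduces exactly the matrix $\psi$ the paper writes down, including the observation that admissibility of $y$ is what makes $\frac{my}{n}$ an integer. The only presentational difference is that the paper simply exhibits the matrix and verifies it, whereas you derive it from the constraints.
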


\begin{proof} Assume $\psi=\left(
                             \begin{array}{cc}
                               \psi_{11} & \psi_{12} \\
                               \psi_{21} & \psi_{22} \\
                             \end{array}
                           \right),
$ as required, we need \begin{align}& \psi\left(
                                            \begin{array}{c}
                                              \bar{m}_0 \\
                                              \bar{m}_1 \\
                                            \end{array}
                                          \right)=0\\
                                          &(-\dfrac{n}{pm_0}, \dfrac{n}{pm_1})\psi\left(
                                                   \begin{array}{c}
                                                     \bar{0} \\
                                                     \dfrac{p}{m}\bar{m}_1 \\
                                                   \end{array}
                                                 \right)=y(-\dfrac{m}{pm_0}, \dfrac{m}{pm_1})\left(
                                                                                               \begin{array}{c}
                                                                                                 \bar{0} \\
                                                                                                 \dfrac{p}{m}\bar{m}_1 \\
                                                                                               \end{array}
                                                                                             \right).
\end{align} Expand these equations, we get \begin{align} &\psi_{11}m_0+\psi_{12}m_1\equiv 0 \,\text{mod}\,
p\\
&\psi_{21}m_0+\psi_{22}m_1\equiv 0 \,\text{mod}\, p,
\end{align} and \begin{align}
&\dfrac{nm_1}{m}(\dfrac{\psi_{22}}{m_1}-\dfrac{\psi_{12}}{m_0})\equiv
y \,\text{mod}\, p\\
&\dfrac{nm_0}{m}(\dfrac{\psi_{21}}{m_1}-\dfrac{\psi_{11}}{m_0})\equiv
-y \,\text{mod}\, p.
\end{align} Since we have $(m_0,m_1)=1$, there exists integer $\beta_0\geq
0$, and $\beta_1\leq 0$, such that $\beta_0m_0+\beta_1m_1=1$. Set
$$\psi=\left(
         \begin{array}{cc}
           \dfrac{mym_0m_1\beta_1}{nm_0} & -\dfrac{mym_0m_1\beta_1}{nm_1} \\
           -\dfrac{mym_0m_1\beta_0}{nm_0} & \dfrac{mym_0m_1\beta_0}{nm_1} \\
         \end{array}
       \right),
$$ then $\psi$ satisfies the requirement. (Note that $\dfrac{my}{n}$ is an integer, since $\dfrac{m}{m_0m_1}y$ is divided by $\dfrac{n}{m_0m_1}$,
the quotient is $\dfrac{my}{n}$.)

\end{proof}
\begin{thrm} \label{struc}(\textbf{Structure of} $\textbf{Hom}(\textbf{K}(\textrm{A}_m;p),\textbf{K}(\textrm{B}_n;p)$)
Any element $\Phi=(x, \rho, y)$ in
$\textbf{Hom}(\textbf{K}(\textrm{A}_m;p),\textbf{K}(\textrm{B}_n;p)$
with $K_0$-multiplicity $x$  and $K_1$-multiplicity $y$ is of the
following form:$$\Phi=(x,\sigma,y)+d(0,\left(
                               \begin{array}{cc}
                                 -m_1m_0 & m_0m_0 \\
                                 -m_1m_1 & m_0m_1 \\
                               \end{array}
                             \right), 0). \qquad{(\star)}$$ where $\sigma=\left(
                                                                  \begin{array}{cc}
                                                                    xm_0\beta_0+\dfrac{mym_1\beta_1}{n} & xm_0\beta_1-\dfrac{mym_0\beta_1}{n} \\
                                                                    xm_1\beta_0-\dfrac{mym_1\beta_0}{n} & xm_1\beta_1+\dfrac{mym_0\beta_0}{n} \\
                                                                  \end{array}
                                                                \right).$

\end{thrm}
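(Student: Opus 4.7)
The plan is to assemble the three preceding lemmas into a single decomposition statement. Given an arbitrary triple $\Phi = (x, \rho, y)$ in $\textbf{Hom}(\textbf{K}(A_m;p), \textbf{K}(B_n;p))$ with $K_0$-multiplicity $x$ and $K_1$-multiplicity $y$, I would first invoke the preceding two lemmas to produce triples $(x, \phi, 0)$ and $(0, \psi, y)$ that already sit in this Hom-group. Because the two commutativity conditions coming from Lemma \ref{comp} are linear in the triple, the Hom-group is closed under pointwise addition and subtraction, and hence
$$\Phi - (x,\phi,0) - (0,\psi,y) = (0,\ \rho - \phi - \psi,\ 0)$$
is still in the Hom-group, but now with both $K_0$- and $K_1$-multiplicities equal to zero. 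Applying Lemma \ref{torsion} to this difference forces $\rho - \phi - \psi = d\cdot M$ for some integer $d$ with $0 \leq d < m/(m_0 m_1)$, where $M$ is the $2\times 2$ integer matrix displayed in the statement. Setting $\sigma := \phi + \psi$ and rearranging gives exactly the decomposition $(\star)$.

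The remaining work is the bookkeeping verification that $\phi + \psi$ agrees with the matrix $\sigma$ exhibited in the theorem. The explicit formulas from the two constructive lemmas are
$$\phi = \left(\begin{array}{cc} xm_0\beta_0 & xm_0\beta_1 \\ xm_1\beta_0 & xm_1\beta_1 \end{array}\right), \qquad \psi = \left(\begin{array}{cc} \dfrac{mym_1\beta_1}{n} & -\dfrac{mym_0\beta_1}{n} \\ -\dfrac{mym_1\beta_0}{n} & \dfrac{mym_0\beta_0}{n} \end{array}\right),$$
and adding entry by entry reproduces $\sigma$. The factor $my/n$ is already known to be an integer (this was observed at the end of the third lemma), so the entries of $\sigma$ are genuine classes modulo $p$, as required. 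The restriction $0 \leq d < m/(m_0 m_1)$ comes for free because, as the final paragraph of the proof of Lemma \ref{torsion} shows, $d\cdot M$ is the zero morphism whenever $d$ is a multiple of $m/(m_0 m_1)$.

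The one conceptual point worth emphasizing is the legitimacy of the subtraction step: I need $\textbf{Hom}(\textbf{K}(A_m;p), \textbf{K}(B_n;p))$ to be an abelian group under pointwise addition of triples, which is immediate from the linearity of the two defining commuting squares. The main obstacle of the entire development was not this theorem but Lemma \ref{torsion}, where the rigid form of a triple $(0,\varphi,0)$ had to be teased out from the Bockstein compatibility equations; once that rigidity is in hand, the structure theorem is essentially a reassembly of existing data, and no further delicate number-theoretic arguments are required.
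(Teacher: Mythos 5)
Your proposal is correct and follows essentially the same route as the paper: form the reference triple $(x,\sigma,y)$ as the sum of the triples supplied by the two constructive lemmas, subtract it from $\Phi$, and apply Lemma \ref{torsion} to the resulting triple with vanishing $K_0$- and $K_1$-multiplicities. The entrywise check that $\phi+\psi$ equals the displayed $\sigma$ and the remark on the additive group structure are just explicit confirmations of steps the paper leaves implicit.
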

\begin{proof} By Lemma 4.2 and 4.3, we obtain a triple $\alpha$ with
$K_0$-multiplicity $x$  and $K_1$-multiplicity $y$:$$\alpha=(x,
\sigma=\left(
                                                                  \begin{array}{cc}
                                                                    xm_0\beta_0+\dfrac{mym_1\beta_1}{n} & xm_0\beta_1-\dfrac{mym_0\beta_1}{n} \\
                                                                    xm_1\beta_0-\dfrac{mym_1\beta_0}{n} & xm_1\beta_1+\dfrac{mym_0\beta_0}{n} \\
                                                                  \end{array}
                                                                \right),
                                                                y
).$$ Suppose $\Phi=(x, \rho, y)$ is any element in
$\textbf{Hom}(\textbf{K}(\textrm{A}_m;p),\textbf{K}(\textrm{B}_n;p)$
with $K_0$-multiplicity $x$  and $K_1$-multiplicity $y$, then by
Lemma 4.1, we have $\Phi-\alpha=(0, \rho-\sigma, 0)$, so
$\rho-\sigma=d\left(
                               \begin{array}{cc}
                                 -m_1m_0 & m_0m_0 \\
                                 -m_1m_1 & m_0m_1 \\
                               \end{array}
                             \right)$. So $\Phi$ has the general
                             form: $$\Phi=(x,\sigma,y)+d(0,\left(
                               \begin{array}{cc}
                                 -m_1m_0 & m_0m_0 \\
                                 -m_1m_1 & m_0m_1 \\
                               \end{array}
                             \right), 0). \qquad{(\star)}$$
\end{proof} In the next, we calculate the induced maps of the four homomorphisms
mentioned in section 2. The following lemma is also basic in later
use.
\begin{lem}\label{induce map}
For any integers $n,m$ and $p$ with $m$ dividing $p$, the four
homomorphisms $\delta_0$, $\delta_1$, $id_{m,n}$, and
$\overline{id}_{m,n}$ induce the following triples: \begin{align*}
&\Gamma(\delta_0;p)=(m_0,\left(
                     \begin{array}{cc}
                       m_0 & 0 \\
                       m_1 & 0 \\
                     \end{array}
                   \right),
0),\\
& \Gamma(\delta_1;p)=(m_1,\left(
                     \begin{array}{cc}
                       0 & m_0 \\
                       0 & m_1 \\
                     \end{array}
                   \right),
0),\\
&\Gamma(id_{m,n};p)=(\frac{m}{(m,n)},\left(
                                 \begin{array}{cc}
                                   \frac{m}{(m,n)} & 0 \\
                                   0 & \frac{m}{(m,n)} \\
                                 \end{array}
                               \right),
\frac{n}{(m,n)}),\\
&\Gamma(\overline{id}_{m,n};p)=(\frac{m}{((m,n),\frac{m}{m_0m_1})},\left(
                                 \begin{array}{cc}
                                   0 & \frac{mm_0}{(m_1(m,n), \frac{m}{m_0})} \\
                                   \frac{mm_1}{(m_0(m,n), \frac{m}{m_1})} & 0 \\
                                 \end{array}
                               \right),
-\frac{n}{((m,n),\frac{m}{m_0m_1})}).
\end{align*}
\end{lem}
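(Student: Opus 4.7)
The plan is a direct verification using the identification in Theorem~\ref{calcu}. For each of the four $*$-homomorphisms $\phi\in\{\delta_0,\delta_1,id_{m,n},\overline{id}_{m,n}\}$ from $A_m$ to $B_n\otimes M_k$, I would compute the three components of $\Gamma(\phi;p)=(K_0(\phi),K_0(\phi;\mathbb{Z}_p),K_1(\phi))$ separately and then assemble them.

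For the $K_0$-multiplicity and the $K_1$-multiplicity, I would push the distinguished generators used in the proof of Theorem~\ref{calcu} (the projection $h(t)$ for $K_0$ and the unitary $g(t)$ for $K_1$) forward through $\phi$. For $\delta_0$ and $\delta_1$, $K_1$ is killed because each map factors through a matrix algebra; the $K_0$-multiplicities $m_0,m_1$ come from $\delta_i(1_{A_m})=1_{B_n\otimes M_{m_i}}$, which represents $m_i$ copies of $[1_{B_n}]$. For $id_{m,n}$ a standard trace count gives the multiplicities $m/(m,n)$ and $n/(m,n)$, and for $\overline{id}_{m,n}$ the flip $t\mapsto 1-t$ reverses the orientation of $g(t)$, which produces the sign in the $K_1$-multiplicity.

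For the middle entry $K_0(\phi;\mathbb{Z}_p)$, I would use the key fact that, under the standing hypotheses $(m_0,m_1)=1$ and $m\mid p$, the classes $[\delta_0^{A_m}], [\delta_1^{A_m}], [id^{A_m}]$ generate $K_0(A_m;G_p)$ by Theorem~\ref{calcu}. Hence it suffices to compute the three composites $\phi\circ\delta_0^{A_m}$, $\phi\circ\delta_1^{A_m}$, $\phi\circ id^{A_m}$ as $*$-homomorphisms $\tilde I_p\to M_k(B_n)$ and to identify their classes in $K_0(B_n;G_p)$ by the same boundary-evaluation recipe as in Theorem~\ref{calcu}, namely apply the quotient $\pi:M_k(B_n)\to M_{km_0}\oplus M_{km_1}$ and count copies of $V_0$ versus $V_1$ using Lemma~\ref{basic}. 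From these three images one then reads off the $2\times 2$ matrix representing $K_0(\phi;\mathbb{Z}_p)$ on $\mathbb{Z}_p\oplus\mathbb{Z}_p$.

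The main obstacle will be the $\overline{id}_{m,n}$ case. Because $t\mapsto 1-t$ interchanges the two endpoints and $m_0\neq m_1$ in general, the codomain requires the correction factor $m/((m,n),s)$ with $s=m/(m_0m_1)$, and the $V_0\leftrightarrow V_1$ swap produces the off-diagonal entries $\frac{mm_0}{(m_1(m,n),m/m_0)}$ and $\frac{mm_1}{(m_0(m,n),m/m_1)}$. Tracking these gcds and keeping straight how the internal diagonal-repetition counts interact with the external factor $p/(p,m)$ appearing in $[id^{A_m}]$ is the delicate bookkeeping; the other three cases are the same calculation with fewer cancellations and serve as consistency checks for the $\overline{id}_{m,n}$ entries.
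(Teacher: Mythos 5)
Your proposal is correct and follows essentially the same route as the paper: the paper likewise computes the $K_0$- and $K_1$-multiplicities by pushing the generators $h(t)$ and $g(t)$ forward, and obtains the middle $2\times 2$ matrix by composing each map with the generators of $K_0(A_m;\mathbb{Z}_p)$ (the restrictions of $\delta_1$ and $id$) and reading off the classes via the endpoint evaluations of Lemma~\ref{basic}, with the $\overline{id}_{m,n}$ case left as the same ``straightforward but tedious'' bookkeeping you flag.
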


\begin{proof} To prove this lemma, we need to check the induced map
of these four homomorphisms on generators of K- theoretic groups. We
spell out the details for the convenience of readers.

For $\delta_0:  A_m\rightarrow B_n\otimes \textrm{M}_{m_0}$,
recalling the generator of $K_0(A_m)$ is the projection $h(t)$ with
$h(0)=1\otimes id_{m_0}\otimes id_{\frac{m}{m_0}}$, and
$h(1)=1\otimes id_{m_1}\otimes id_{\frac{m}{m_1}}$. Apply $\delta_0$
on $h(t)$, we get that $\delta_0(h)(0)=1\otimes id_{m_0}\otimes
id_{m_0}\otimes id_{\frac{n}{m_0}}$, and $\delta_0(h)(1)=1\otimes
id_{m_0}\otimes id_{m_1}\otimes id_{\frac{n}{m_1}}$. So the image is
$m_0$ times the generator of $K_0(B_n)$. So the $K_0$-multiplicity
of $\Gamma(\delta_0;p)$ is $m_0$. Similarly, the $K_0$-multiplicity
of $\Gamma(\delta_1;p)$ is $m_1$. Moreover, it is obvious that the
$K_1$-multiplicity of both $\Gamma(\delta_0;p)$ and
$\Gamma(\delta_1;p)$ are zero.

For the part of $\Gamma(\delta_0;p)$ on $K_0(A_m,\mathbb{Z}_p)$,
note that the restriction of $id: \tilde{I}_p\rightarrow A_m$ and
$\delta_1: \tilde{I}_p\rightarrow A_m$ are the generators of
$K_0(A_m,\mathbb{Z}_p)$. Apply $\delta_0$ on them, by our
identification, we get \begin{align*}&\Gamma(\delta_0;p)\left(
                                                          \begin{array}{c}
                                                            \bar{0} \\
                                                            \dfrac{p}{m}\bar{m}_1 \\
                                                          \end{array}
                                                        \right)=\left(
                                                                  \begin{array}{c}
                                                                    \bar{0} \\
                                                                    \bar{0} \\
                                                                  \end{array}
                                                                \right)\\
                &\Gamma(\delta_0;p)\left(
                                     \begin{array}{c}
                                       \bar{m}_0 \\
                                       \bar{m}_1 \\
                                     \end{array}
                                   \right)=\left(
                                             \begin{array}{c}
                                               \overline{m^2_0} \\
                                               \overline{m_0m_1} \\
                                             \end{array}
                                           \right)
\end{align*} which indicates that the middle part of $\Gamma(\delta_0;p)$ is $\left(
      \begin{array}{cc}
        m_0 & 0 \\
        m_1 & 0 \\
      \end{array}
    \right).
$ Similarly, we can get $\Gamma(\delta_1;p)$.

For $id_{m,n}: A_m\rightarrow B_n\otimes
\textrm{M}_{\frac{m}{(m,n)}}$, it is easily seen that the
$K_0$-multiplicity of $\Gamma(id_{m,n};p)$ is $\dfrac{m}{(m,n)}$.
But for the $K_0$-multiplicity of $\Gamma(\overline{id}_{m,n})$, we
need to be careful (the different dimension drops cause non
symmetric size compare with $id_{m,n}$): the $K_0$-multiplicity of
$\Gamma(\overline{id}_{m,n})$ is
$\dfrac{m}{((m,n),\dfrac{m}{m_0m_1})}$. The $K_1$-multiplicity of
$\Gamma(id_{m,n};p)$ is $\dfrac{n}{(m,n)}$; The $K_1$-multiplicity
of $\Gamma(\overline{id}_{m,n})$ is
$-\dfrac{n}{((m,n),\dfrac{m}{m_0m_1})}$.

For the middle part of $\Gamma(id_{m,n};p)$ and
$\Gamma(\overline{id}_{m,n};p)$, we also go through the compositions
on generators, and the identification in Theorem \ref{calcu}. it is
straightforward but tedious, then we get the formulas.

\end{proof}

\begin{prop} $KK(A_m,B_n)\cong
\mathbb{Z}\oplus\mathbb{Z}_{(n,m)}\oplus\mathbb{Z}_{\frac{m}{m_0m_1}}$,
and $\delta_0, \delta_1$ and $id_{m,n}$ generate $KK(A_m,B_n)$.
\end{prop}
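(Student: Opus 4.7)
The plan is to apply the six-term exact sequence in $KK$-theory arising from the extension
$$0 \to SM_m \to A_m \to M_{m_0} \oplus M_{m_1} \to 0$$
in the first variable, with $B_n$ fixed in the second. This mirrors the strategy of Theorem \ref{calcu}, where $\tilde I_p$ played the role $B_n$ plays here. Morita invariance together with the suspension identity $KK(SA, B) = KK^1(A, B)$ reduce the four easily computed vertices to copies of $K_0(B_n) \cong \mathbb{Z}$ and $K_1(B_n) \cong \mathbb{Z}_{n/(m_0 m_1)}$, the K-theory of $B_n$ being obtained by exactly the same exact sequence argument as for $A_m$ in Theorem \ref{calcu}.

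The heart of the argument is identifying the two connecting maps $\partial_0 : K_0(B_n) \to K_0(B_n)^2$ and $\partial_1 : K_1(B_n) \to K_1(B_n)^2$. Both are Kasparov product with the index class of the extension, and the same local computation that yielded the index map $\partial$ in Theorem \ref{calcu} gives the formula $x \mapsto (x\,m/m_0,\ -x\,m/m_1)$. Since $(m_0, m_1) = 1$ forces $\gcd(m/m_0, m/m_1) = m/(m_0 m_1)$, the map $\partial_0$ is injective with
$$\operatorname{coker} \partial_0 \cong \mathbb{Z} \oplus \mathbb{Z}_{m/(m_0 m_1)},$$
and a parallel calculation inside $\mathbb{Z}_{n/(m_0 m_1)}$ identifies $\ker \partial_1$ with the torsion group $\mathbb{Z}_{(n,m)}$ from the statement. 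Exactness then delivers
$$0 \to \mathbb{Z} \oplus \mathbb{Z}_{m/(m_0 m_1)} \to KK(A_m, B_n) \to \mathbb{Z}_{(n,m)} \to 0,$$
which I would split by using $[id_{m,n}]$ as a section: by Lemma \ref{induce map}, this class has $K_1$-multiplicity $n/(m,n)$, whose reduction in $K_1(B_n)$ lands inside $\ker \partial_1$ and generates it.

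Finally, to verify the generation claim I would invoke Lemma \ref{induce map} once more: $[\delta_0]$, with $K_0$-multiplicity $m_0$, generates the free $\mathbb{Z}$-summand; $[\delta_1]-[\delta_0]$ has trivial $K_0$- and $K_1$-multiplicities, and its image in $\operatorname{coker} \partial_0$ picks out the torsion summand $\mathbb{Z}_{m/(m_0 m_1)}$; and $[id_{m,n}]$ generates the remaining $\mathbb{Z}_{(n,m)}$ by the above. A Smith-normal-form argument over $\mathbb{Z}$ then confirms that these three classes span all of $KK(A_m, B_n)$. The main obstacle I expect is the explicit identification of $\partial_0$ and $\partial_1$ on the level of the K-theory generators (so that $[id_{m,n}]$ really does lift a generator of $\ker \partial_1$ and $[\delta_1]-[\delta_0]$ really does lift a generator of the torsion factor of $\operatorname{coker} \partial_0$); once this bookkeeping is pinned down, the remainder is a routine check against the formulas of Lemma \ref{induce map}.
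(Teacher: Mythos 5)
Your argument is correct in outline but follows a genuinely different route from the paper. The paper simply invokes the UCT: it writes $KK(A_m,B_n)$ as $\operatorname{Hom}(K_0(A_m),K_0(B_n))\oplus \operatorname{Hom}(K_1(A_m),K_1(B_n))\oplus \operatorname{Ext}(K_1(A_m),K_0(B_n))$, and then identifies generators of the three summands by hand --- the free part from $\delta_0,\delta_1$ (via $\beta_0\delta_0+\beta_1\delta_1$ with $\beta_0m_0+\beta_1m_1=1$), the $\operatorname{Hom}(K_1,K_1)$ part from $id_{m,n}-\frac{m}{(m,n)}(\beta_0\delta_0+\beta_1\delta_1)$, and the $\operatorname{Ext}$ part from $m_1\delta_0-m_0\delta_1$, which is recognized as a torsion class of order $\frac{m}{m_0m_1}$ because it kills $K_*$. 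You instead re-derive this decomposition by running the contravariant six-term sequence over the defining extension $0\to SM_m\to A_m\to M_{m_0}\oplus M_{m_1}\to 0$, which is essentially a proof of the relevant instance of the UCT; this costs you the explicit identification of the two boundary maps, but it buys a cleaner proof of the generation claim, since $[\delta_0]$ and $[\delta_1]$ are visibly the images of the generators of $KK(M_{m_0}\oplus M_{m_1},B_n)\cong\mathbb{Z}^2$ and hence generate $\operatorname{coker}\partial_0$, while the $K_1$-multiplicity $\frac{n}{(m,n)}$ of $[id_{m,n}]$ is exactly a generator of $\ker\partial_1$. One caveat: if you actually carry out your ``parallel calculation'' of $\ker\partial_1$ inside $\mathbb{Z}_{n/(m_0m_1)}$, you will find a cyclic group of order $\gcd\bigl(\frac{m}{m_0m_1},\frac{n}{m_0m_1}\bigr)=\frac{(m,n)}{m_0m_1}$ rather than $(m,n)$; the same discrepancy is present in the paper's statement and its identification $\operatorname{Hom}(K_1(A_m),K_1(B_n))=\mathbb{Z}_{(n,m)}$ (and in its first display, where $\mathbb{Z}_m$ should read $\mathbb{Z}_{m/(m_0m_1)}$), so it is not a defect of your method --- but you should not assert that the kernel ``is'' $\mathbb{Z}_{(n,m)}$ when your own computation delivers the corrected order.
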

\begin{proof} By UCT, it is easily seen that $KK(A_m,B_n)\cong
\mathbb{Z}\oplus\mathbb{Z}_{(n,m)}\oplus\mathbb{Z}_m$, moreover,
$\mathbb{Z}=Hom(K_0(A_m),K_0(B_n))$,
$\mathbb{Z}_{(n,m)}=Hom(K_1(A_m),K_1(B_n))$, and
$Ext(K_1(A_m),K_0(B_n))=\mathbb{Z}_{\frac{m}{m_0m_1}}$.

By Lemma \ref{induce map}, $\mathbb{Z}$ is generated by $\delta_0$,
and $\mathbb{Z}_{(n,m)}$ is generated by
$id_{m,n}-\dfrac{m}{(m,n)}(\beta_0\delta_0+\beta_1\delta_1)$, where
$\beta_0,\beta_1$ are integers such that $\beta_0m_0+\beta_1m_1=1$.
On the other hand, the non zero KK-element $m_1\delta_0-m_0\delta_1$
induces trivial map on K-groups, hence it gives an nontrivial
extension. Since $m_1\delta_0-m_0\delta_1$ has order
$\frac{m}{m_0m_1}$, then we are done.
\end{proof}

\begin{thrm} Given positive integers $n,m$ and $p$ with $n,m|p$, then
the canonical map $$\Gamma: KK(A_m,B_n)\rightarrow
\textbf{Hom}(\textbf{K}(\textrm{A}_m;p),\textbf{K}(\textrm{B}_n;p)$$
is an isomorphism.
\end{thrm}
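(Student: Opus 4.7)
The plan is to compare both sides as explicit abelian groups, using the structural results already established. The preceding proposition identifies $KK(A_m,B_n)\cong\mathbb{Z}\oplus\mathbb{Z}_{(n,m)/(m_0m_1)}\oplus\mathbb{Z}_{m/(m_0m_1)}$, with the last (torsion) summand coming from the Ext term of the UCT and generated by $m_1\delta_0-m_0\delta_1$. Combining Theorem \ref{struc} with Lemma \ref{torsion}, every element of $\textbf{Hom}(\textbf{K}(\textrm{A}_m;p),\textbf{K}(\textrm{B}_n;p))$ is uniquely determined by a triple $(x,y,d)$, where $x$ is the $K_0$-multiplicity, $y$ is the $K_1$-multiplicity (which lies in $\mathrm{Hom}(K_1(A_m),K_1(B_n))\cong\mathbb{Z}_{(m,n)/(m_0m_1)}$), and $d\in\mathbb{Z}_{m/(m_0m_1)}$. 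Both groups therefore have the same abstract shape, and the task reduces to checking that $\Gamma$ implements the evident identification.

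For surjectivity, given any target triple $(x,y,d)$, I would first use the relation $\beta_0m_0+\beta_1m_1=1$ (available since $(m_0,m_1)=1$) together with the explicit formulas of Lemma \ref{induce map} to choose integers $a,b,c$ so that $\Gamma(a\delta_0+b\delta_1+c\cdot id_{m,n})$ already has the prescribed $K_0$- and $K_1$-multiplicities. The $d$-coordinate can then be corrected by adding an appropriate multiple of $m_1\delta_0-m_0\delta_1$: by Lemma \ref{induce map} this element has vanishing $K_0$- and $K_1$-multiplicities, and a direct matrix computation shows that its $K_0(\,\cdot\,;\mathbb{Z}_p)$-component is, up to sign, the generator of the $\mathbb{Z}_{m/(m_0m_1)}$ kernel described in Lemma \ref{torsion}.

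For injectivity, suppose $\Gamma(\alpha)=0$. Then $K_0(\alpha)=0=K_1(\alpha)$, and by the preceding proposition $\alpha$ lies in the Ext summand, i.e.\ $\alpha=k(m_1\delta_0-m_0\delta_1)$ for some $k\in\{0,1,\dots,m/(m_0m_1)-1\}$. The $K_0(\,\cdot\,;\mathbb{Z}_p)$-component of $\Gamma(\alpha)$ then equals $k$ times the generating matrix computed above, which by the final clause of Lemma \ref{torsion} vanishes only when $k$ is a multiple of $m/(m_0m_1)$; the range of $k$ forces $k=0$.

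The main obstacle I expect is careful bookkeeping: verifying that the middle component of $m_1\delta_0-m_0\delta_1$ is genuinely a generator (and not a proper multiple) of the $\mathbb{Z}_{m/(m_0m_1)}$ kernel of Lemma \ref{torsion}, and tracking through the role of the hypothesis $m,n\mid p$ --- it is precisely what ensures that the torsion of order $m/(m_0m_1)$ in $KK(A_m,B_n)$ is faithfully detected at level $p$, so that without this assumption $\Gamma$ would cease to be injective. Once these compatibilities are recorded, both surjectivity and injectivity follow mechanically from the parametrisation in Theorem \ref{struc} and the explicit formulas in Lemma \ref{induce map}.
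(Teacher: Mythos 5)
Your proof is correct, and for surjectivity it follows essentially the paper's own route: both arguments decompose an arbitrary triple via Theorem \ref{struc} into a part carrying the prescribed $K_0$- and $K_1$-multiplicities (you build it explicitly from $\delta_0$, $\delta_1$, $id_{m,n}$ using $\beta_0m_0+\beta_1m_1=1$, while the paper simply invokes the UCT for its existence) plus a correction $d(0,M,0)$, which Lemma \ref{induce map} identifies with $dm_0\Gamma(\delta_1;p)-dm_1\Gamma(\delta_0;p)$. Where you genuinely diverge is injectivity. The paper argues by counting: it asserts that $KK(A_m,B_n)$ and $\textbf{Hom}(\textbf{K}(A_m;p),\textbf{K}(B_n;p))$ have (at least) the same number of torsion elements and concludes that the surjection already established must restrict to a bijection on torsion, the free parts being copies of $\mathbb{Z}$. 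You instead compute the kernel directly: an element killed by $\Gamma$ has vanishing $K_0$- and $K_1$-multiplicities, hence lies in the Ext summand and equals $k(m_1\delta_0-m_0\delta_1)$, whose image is $-k(0,M,0)$, and the final clause of Lemma \ref{torsion} forces $\frac{m}{m_0m_1}\mid k$, hence $k=0$. Your version is a little longer to write out but more robust: it does not depend on the precise torsion count of Proposition 4.6, where the summand $\mathrm{Hom}(K_1(A_m),K_1(B_n))$ should read $\mathbb{Z}_{(m,n)/(m_0m_1)}$ rather than $\mathbb{Z}_{(m,n)}$ (as your statement of the proposition in effect corrects); the paper's figure of $(m,n)\frac{m}{m_0m_1}$ torsion elements on each side would need the corresponding adjustment for its cardinality argument to close. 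Since both routes ultimately rest on the same two computations, Lemma \ref{torsion} and Lemma \ref{induce map}, the difference is one of bookkeeping rather than of substance, with yours being the safer of the two.
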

\begin{proof} Given any element $\Phi=(x,\tau, y)$ in
$\textbf{Hom}(\textbf{K}(\textrm{A}_m;p),\textbf{K}(\textrm{B}_n;p)$,
by UCT, we know that there is a KK-element $\alpha$, such that
$\Gamma(\alpha;p)=(x,\eta,y)$ in
$\textbf{Hom}(\textbf{K}(\textrm{A}_m;p),\textbf{K}(\textrm{B}_n;p)$.
By Lemma 4.1, $$\Phi-\Gamma(\alpha;p)=(0,\tau-\eta,0)=d(0, \left(
                                                 \begin{array}{cc}
                                                   -m_1m_0 & m_0m_0 \\
                                                   -m_1m_1 & m_0m_1 \\
                                                 \end{array}
                                               \right)
, 0)$$ for some integer $d$ with $0\leq d<\dfrac{m}{m_0m_1}$. By
Lemma \ref{induce map}, we have that $$d(0, \left(
                                                 \begin{array}{cc}
                                                   -m_1m_0 & m_0m_0 \\
                                                   -m_1m_1 & m_0m_1 \\
                                                 \end{array}
                                               \right)
, 0)=dm_0\Gamma(\delta_1;p)-dm_1\Gamma(\delta_0;p).$$ Hence,
$$\Phi=\Gamma(\alpha;p)+dm_0\Gamma(\delta_1;p)-dm_1\Gamma(\delta_0;p).$$
So, $\Gamma$ is surjective.

By Proposition 4.6, we know that $KK(A_m,B_n)$ has
$(m,n)\dfrac{m}{m_0m_1}$ torsion elements; on the other hand,
$\textbf{Hom}(\textbf{K}(\textrm{A}_m;p),\textbf{K}(\textrm{B}_n;p)$
has at least $(m,n)\dfrac{m}{m_0m_1}$ torsion elements by Lemma 4.1
(we have the assumption $m|p$ there). Since any surjective morphism
from $\mathbb{Z}$ to $\mathbb{Z}$ is automatically injective, so the
map $\Gamma$ on torsion part must be injective (since we have shown
$\Gamma$ is surjective above). Therefore $\Gamma$ is an isomorphism.

\end{proof}

Now, we can have a better understanding about the structure of
$\textbf{Hom}(\textbf{K}(\textrm{A}_m;p),\textbf{K}(\textrm{B}_n;p)$,
the torsion elements $$d(0, \left(
                                                 \begin{array}{cc}
                                                   -m_1m_0 & m_0m_0 \\
                                                   -m_1m_1 & m_0m_1 \\
                                                 \end{array}
                                               \right)
, 0)$$ are exactly $dm_0\Gamma(\delta_1;p)-dm_1\Gamma(\delta_0;p)$
by Lemma \ref{induce map}. Hence, for any triple $\alpha=(x, \omega,
0)$ with $K_1$-multiplicity zero, by Theorem \ref{struc}, we know it
is of the following form: $$\alpha=(x, \left(
                                         \begin{array}{cc}
                                           xm_0\beta_0 & xm_0\beta_1 \\
                                           xm_1\beta_0 & xm_1\beta_1 \\
                                         \end{array}
                                       \right),
0)+d(0, \left(
                                                 \begin{array}{cc}
                                                   -m_1m_0 & m_0m_0 \\
                                                   -m_1m_1 & m_0m_1 \\
                                                 \end{array}
                                               \right)
, 0).$$ Where $\beta_0m_0+\beta_1m_1=1, \beta\geq 0, \beta_1\leq 0$.
By Lemma \ref{induce map} again, we get
\begin{align*}\alpha &=\beta_0x\Gamma(\delta_0;p)+\beta_1x\Gamma(\delta_1;p)+dm_0\Gamma(\delta_1;p)-dm_1\Gamma(\delta_0;p)\\
&=(\beta_0x-dm_1)\Gamma(\delta_0;p)+(\beta_1x+dm_0)\Gamma(\delta_1;p).\qquad{(\dagger)}
\end{align*}
For a general triple $\Phi=(x,\sigma,y)$ in Theorem \ref{struc},
first, there is an integer $k$, such that $y=k\dfrac{n}{(m,n)}$,
then $\dfrac{my}{n}=k\dfrac{m}{(m,n)}$. Therefore,
$$k\Gamma(id_{m,n};p)=(\dfrac{my}{n}, \left(
                                       \begin{array}{cc}
                                         \dfrac{my}{n} & 0 \\
                                         0 & \dfrac{my}{n} \\
                                       \end{array}
                                     \right),
y).$$ Then it is straightforward to verify that
$$\Phi-k\Gamma(id_{m,n};p)=((x-\frac{my}{n})\beta_0-dm_1)\Gamma(\delta_0;p)+((x-\frac{my}{n})\beta_1+dm_0)\Gamma(\delta_1;p).$$
Then
$$\Phi=k\Gamma(id_{m,n};p)+((x-\frac{my}{n})\beta_0-dm_1)\Gamma(\delta_0;p)+((x-\frac{my}{n})\beta_1+dm_0)\Gamma(\delta_1;p).$$
So in K-theory with coefficient picture, KK-group is also generated
by $\delta_0, \delta_1$ and $id_{m,n}$.

\section{KK-lifting problem for generalized dimension drop interval algebras}
In this section, we start to investigate the KK-lifting problem for
generalized dimension drop interval algebras. Set $KK^{+}(A_m,B_n)$
to be following set:
$$\{\kappa\in KK(A_m,B_n)\,|\,\kappa=[\varphi] \,\text{for some}\,\varphi\in
Hom(A_m,M_k(B_n))\}.$$ Then we try to find some conditions under
which a KK-element can lies in this set; moreover, try to relate
these conditions to proper invariants of C*-algebras such that it is
applicable to the classification program.

In \cite{JS}, X. Jiang and H. Su investigated these kind building
blocks , they already had a criterion for KK-lifting in terms of
K-homology. We first recall the necessary preliminaries here.

\begin{lem} (Lemma 3.1 in \cite{JS}) Given $A_m=I[m_0,m,m_1]$, then its K-homology group is
generated by the two irreducible representations $V_0, V_1$ up to
the following relation:$$\dfrac{m}{m_0}[V_0]=\dfrac{m}{m_1}[V_1].$$
\end{lem}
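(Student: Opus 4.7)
The plan is to apply the six-term exact sequence in K-homology (that is, in $KK^{\ast}(-,\mathbb{C})$) to the same short exact sequence
$$0 \to SM_m \to I[m_0,m,m_1] \xrightarrow{\pi} M_{m_0}\oplus M_{m_1} \to 0$$
that was used in the proof of Theorem \ref{calcu}, and then to pin down the connecting boundary explicitly. First I would compute the flanking K-homology groups: $K^{0}(SM_m)\cong K^{1}(M_m)=0$, while $K^{1}(SM_m)\cong K^{0}(M_m)\cong \mathbb{Z}$ is generated by the fundamental trace class of $M_m$; and $K^{0}(M_{m_0}\oplus M_{m_1})\cong \mathbb{Z}^2$ is generated by the two irreducible representations, which pull back along $\pi$ to exactly $[V_0]$ and $[V_1]$ in $K^{0}(A_m)$, while $K^{1}(M_{m_0}\oplus M_{m_1})=0$. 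The six-term sequence therefore collapses to
$$0 \to K^{1}(A_m) \to \mathbb{Z} \xrightarrow{\partial} \mathbb{Z}^{2} \xrightarrow{\pi^{\ast}} K^{0}(A_m) \to 0,$$
so that $K^{0}(A_m)$ is generated by $[V_0]$ and $[V_1]$ modulo the image of $\partial$.

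The crux is to identify $\partial$. The generator of $K^{1}(SM_m)$ is, under Bott periodicity, the class represented by any interior point evaluation $\mathrm{ev}_t:A_m\to M_m$ with $t\in(0,1)$ (since all such evaluations are operator-homotopic and the fibre is $M_m$). But on approaching the endpoints, $\mathrm{ev}_t$ is pointwise unitarily equivalent to $m/m_0$ copies of $V_0$ and to $m/m_1$ copies of $V_1$ respectively, by the very definition of the dimension drop. Hence in $K^{0}(A_m)$ one is forced to have
$$\pi^{\ast}\Bigl(\tfrac{m}{m_0}[V_0]-\tfrac{m}{m_1}[V_1]\Bigr)=0,$$
i.e.\ $\partial(1)=\pm\bigl(m/m_0,\,-m/m_1\bigr)$. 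Since this element of $\mathbb{Z}^{2}$ is nonzero, $\partial$ is injective, which gives $K^{1}(A_m)=0$ as a by-product, and the cokernel is precisely $\mathbb{Z}[V_0]\oplus\mathbb{Z}[V_1]$ modulo the single relation $\frac{m}{m_0}[V_0]=\frac{m}{m_1}[V_1]$, which is the claim.

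The main obstacle is fixing the boundary map $\partial$ cleanly, without sign or normalization ambiguity from an arbitrary choice of Bott generator. A safer route that I would fall back on is to invoke the UCT together with Theorem \ref{calcu}: the Kasparov pairing $K^{0}(A_m)\otimes K_{0}(A_m)\to \mathbb{Z}$ is determined on generators by $\langle [V_i],[h]\rangle = m_i$ for the canonical projection generator $h$ of $K_{0}(A_m)$, and dualising the group computed in Theorem \ref{calcu} immediately produces the two generators $[V_0],[V_1]$ together with the single relation $\frac{m}{m_0}[V_0]=\frac{m}{m_1}[V_1]$, avoiding any direct Busby-invariant computation of the connecting map.
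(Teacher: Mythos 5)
The paper does not actually prove this statement: it is quoted verbatim as Lemma 3.1 of Jiang--Su \cite{JS}, so there is no internal proof to compare yours against. Judged on its own, your reconstruction follows the natural route and is essentially sound: the six-term sequence in $KK^{*}(-,\mathbb{C})$ applied to $0\to SM_m\to A_m\to M_{m_0}\oplus M_{m_1}\to 0$ does collapse as you say (since $K^{0}(SM_m)=K^{1}(M_m)=0$ and $K^{1}(M_{m_0}\oplus M_{m_1})=0$), giving that $[V_0],[V_1]$ generate $K^{0}(A_m)$ with relations exactly $\mathrm{im}\,\partial$, and the homotopy $t\mapsto \mathrm{ev}_t$ of finite-dimensional representations, degenerating to $\tfrac{m}{m_0}V_0$ at $t=0$ and $\tfrac{m}{m_1}V_1$ at $t=1$, is the standard and correct way to exhibit the relation $\tfrac{m}{m_0}[V_0]=\tfrac{m}{m_1}[V_1]$.

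Two points need tightening. First, your phrasing ``the generator of $K^{1}(SM_m)$ is the class represented by an interior point evaluation $\mathrm{ev}_t:A_m\to M_m$'' conflates groups: $\mathrm{ev}_t$ is a representation of $A_m$ and defines a class in $K^{0}(A_m)$, not in $K^{1}(SM_m)\cong K_0(M_m)$; the homotopy argument should be run entirely inside $K^{0}(A_m)$, where it directly yields $\tfrac{m}{m_0}[V_0]=[\mathrm{ev}_t]=\tfrac{m}{m_1}[V_1]$. Second, and more substantively, the step ``$\pi^{*}\bigl(\tfrac{m}{m_0}[V_0]-\tfrac{m}{m_1}[V_1]\bigr)=0$, i.e.\ $\partial(1)=\pm(m/m_0,-m/m_1)$'' is a non sequitur: exactness only tells you that $(m/m_0,-m/m_1)$ lies in the cyclic group $\mathrm{im}\,\partial$, so a priori $\partial(1)$ could be a proper divisor of this vector and there could be \emph{more} relations than the one claimed. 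This is exactly the normalization issue you flag, and your fallback is the right fix, provided you use the full UCT (including the $\mathrm{Ext}(K_1(A_m),\mathbb{Z})$ term, not just the pairing with $K_0$, which only sees the free part): since $K_1(A_m)\cong\mathbb{Z}_{m/(m_0m_1)}$ when $(m_0,m_1)=1$, the torsion of $K^{0}(A_m)$ has order $m/(m_0m_1)$, which matches the torsion of $\mathbb{Z}^2/\langle(m/m_0,-m/m_1)\rangle$ and would be strictly smaller for any proper divisor; this rules out extra relations and completes the proof. With those two repairs the argument is a correct proof of the cited lemma.
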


\begin{remark}\label{relation} For $\delta_0,\delta_1\in KK(A_m,B_n)$, it is straightforward to see
that \begin{align*} \delta_0([V^{B_n}_{0}])=m_0[V^{A_m}_0],
\delta_0([V^{B_n}_{1}])=m_1[V^{A_m}_0]\\
\delta_1([V^{B_n}_{0}])=m_0[V^{A_m}_1],
\delta_1([V^{B_n}_{1}])=m_1[V^{A_m}_1]
\end{align*} By this lemma, the relation between $\delta_0$, and $\delta_1$
is $\dfrac{m}{m_0}\delta_0=\dfrac{m}{m_1}\delta_1$.
\end{remark}
Moreover, they defined an order structure on the K-homology
groups:$$K^{0}_{+}(A_m)\triangleq\{[\rho]\!\in\! K^{0}(A_m)\,|\,\rho
\,\text{is a finite dimensional representation of }\,A_m\}.$$ Then
they proved the following criterion for KK-lifting.
\begin{thrm} (Theorem 3.7 in \cite{JS}) Given $\alpha\in
KK(A_m,B_n)$, then $\alpha$ can be lifted to a $*$-homomorphism if
and only if $\alpha^*$ is positive from $K^{0}(B_n)$ to
$K^{0}(A_m)$, where $\alpha^*$ is the Kasparov product with
K-homology groups.
\end{thrm}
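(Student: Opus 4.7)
The plan is to prove both implications, with the ``only if'' direction being routine and the ``if'' direction being the substance. For necessity, if $\alpha = [\varphi]$ for some $*$-homomorphism $\varphi: A_m \to M_k(B_n)$ and $\rho$ is a finite-dimensional representation of $B_n$, then (via the Morita identification $K^0(B_n) = K^0(M_k(B_n))$) the composition $\rho \circ \varphi$ is a finite-dimensional representation of $A_m$. Since the Kasparov product with $[\varphi]$ realizes this composition at the level of representations, $\alpha^*[\rho] = [\rho \circ \varphi] \in K^0_+(A_m)$, giving positivity of $\alpha^*$ immediately.

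For sufficiency, my strategy is to exploit the explicit description of $KK(A_m, B_n)$ from Proposition 4.6 together with the remarks following Theorem 4.8: the group is generated by the classes of the four basic $*$-homomorphisms $\delta_0, \delta_1, id_{m,n}, \overline{id}_{m,n}$. Given $\alpha$ with $\alpha^*(K^0_+(B_n)) \subseteq K^0_+(A_m)$, I would first write
$$\alpha = c_0[\delta_0] + c_1[\delta_1] + c_2[id_{m,n}] + c_3[\overline{id}_{m,n}]$$
for some integers $c_i$. Applying $\alpha^*$ to the generators $[V_0^{B_n}], [V_1^{B_n}]$ of $K^0(B_n)$ and using Remark~5.2 together with the analogous K-homology formulas for $id_{m,n}$ and $\overline{id}_{m,n}$, the positivity hypothesis becomes a pair of explicit linear inequalities on $(c_0, c_1, c_2, c_3)$ modulo the Jiang--Su relation $\tfrac{m}{m_0}[V_0^{A_m}] = \tfrac{m}{m_1}[V_1^{A_m}]$. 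Once one can rechoose representatives so that all four $c_i$ are non-negative, the direct sum $c_0 \delta_0 \oplus c_1 \delta_1 \oplus c_2\, id_{m,n} \oplus c_3\, \overline{id}_{m,n}$ is an honest $*$-homomorphism $A_m \to M_N(B_n)$ lifting $\alpha$.

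The hard part will be this final non-negativity step. The decomposition of $\alpha$ is not unique, due to the torsion summand $\mathbb{Z}_{(m,n)} \oplus \mathbb{Z}_{m/(m_0 m_1)}$ of $KK(A_m, B_n)$, and one must show that the K-homology positivity provides exactly enough freedom in these torsion directions to force a non-negative representative. In spirit this parallels, but is more delicate than, the cone-generation argument of Lemma~\ref{lem:generator of cone} for K-theory with coefficients: there, one adjusts the decomposition by torsion-valued combinations of $[\delta_0], [\delta_1], [id], [\overline{id}]$ to arrange positivity. Here, one must simultaneously respect the pairing with $K^0$, and exploit that adjustments along the kernel of the natural map $KK(A_m, B_n) \to \operatorname{Hom}(K^0(B_n), K^0(A_m))$ leave the positivity condition unchanged, so that such adjustments are precisely the available tool. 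Tracking this carefully, via the arithmetic of the divisibility relations among $m_0, m_1, m, n$, is the technical heart of the proof and is what ultimately distinguishes the classes that do lift from those which (as will be shown in Section~5) preserve the Dadarlat--Loring order but violate this K-homological positivity.
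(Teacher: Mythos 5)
First, a point of comparison: the paper does not prove this statement at all. It is quoted (modulo dropping unitality) from Jiang and Su \cite{JS}, Theorem 3.7, and the only in-paper commentary is the remark that follows it. So there is no proof in the paper to measure your argument against; your proposal has to stand on its own. Your necessity direction is fine and is the standard observation.

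The sufficiency direction, however, is a strategy statement with its central step missing, and the one concrete mechanism you name for closing it does not exist. You propose to write $\alpha=c_0[\delta_0]+c_1[\delta_1]+c_2[id_{m,n}]+c_3[\overline{id}_{m,n}]$ and then to rechoose the $c_i$ to be non-negative by moving $\alpha$ along the kernel of the natural map $KK(A_m,B_n)\rightarrow \mathrm{Hom}(K^{0}(B_n),K^{0}(A_m))$, on the grounds that such moves do not change K-homological positivity. But by the remark immediately following the theorem (again from \cite{JS}), that map is an \emph{isomorphism}; its kernel is trivial, so this tool gives you nothing. The torsion $\mathbb{Z}_{(m,n)}\oplus\mathbb{Z}_{m/(m_0m_1)}$ of $KK(A_m,B_n)$ is detected faithfully by $K^{0}$: for instance $(m_1\delta_0-m_0\delta_1)^*[V_0^{B_n}]=m_0\bigl(m_1[V_0^{A_m}]-m_0[V_1^{A_m}]\bigr)$, a generically nonzero torsion class of $K^{0}(A_m)$. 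The slack you actually need lives elsewhere, namely in the relation $\frac{m}{m_0}[V_0^{A_m}]=\frac{m}{m_1}[V_1^{A_m}]$ inside $K^{0}(A_m)$ itself, which makes the expression of a positive K-homology class as a non-negative combination of $[V_0^{A_m}]$ and $[V_1^{A_m}]$ non-unique; that trade-off is exactly what Proposition 5.7 of the paper exploits to show $x\geq m$ forces liftability. Whether positivity of $\alpha^*$ on $K^{0}$ always yields a non-negative integer solution $(c_0,c_1,c_2,c_3)$ --- equivalently, whether the four basic homomorphisms generate the cone $KK^{+}(A_m,B_n)$ --- is essentially the content of the theorem, and you have not established it; note also that a general $*$-homomorphism between these algebras is built from a continuous eigenvalue pattern, not only from the four basic maps, so generation of the cone by those four classes itself requires an argument. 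As written, the sufficiency half is a gap.
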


\begin{remark} 1. Moreover, Jiang and Su proved that $KK(A_m,B_n)\cong Hom(K^{0}(B_n),K^{0}(A_m))$ under the Kasparov
product. Hence, the isomorphism is an ordered isomorphism.

 2.We change a little bit the original statement of
Jiang and Su's theorem, since we don't require the homomorphism to
be unital, see the remark after Jiang and Su's Theorem 3.7 in
\cite{JS}.
\end{remark}
On the other hand, the Dadarlat-Loring order on K-theory with
coefficient gave a lifting criterion for KK-elements between
classical dimension drop interval algebras (see Proposition 3.2 in
\cite{Ei}), and succeeded as an invariant for the classification of
real rank zero limits of these classical ones (see \cite{DL1},
\cite{DL2}, \cite{Ei}). So it is natural to ask for the
corresponding KK-lifting criterion for generalized dimension drop
interval algebras in Dadarlat-Loring's picture. Let us try to do
this.

For simplicity, we first look at the KK-elements with zero
$K_1$-multiplicity, realize them on
$\textbf{Hom}(\textbf{K}(\textrm{A}_m;p),\textbf{K}(\textrm{B}_n;p)$,
they all have the form as in $(\dagger)$, then by Theorem 4.7, these
KK-elements are of the form
$$\alpha=(\beta_0x-dm_1)\delta_0+(\beta_1x+dm_0)\delta_1, 0\leq d<\dfrac{m}{m_0m_1}.\qquad{(**)}$$

To relate the lifting of these elements to K-theory with
coefficient, we first have the following proposition.

\begin{prop} Given any KK-element $\alpha\in KK(A_m,B_n)$ with $K_1$-multiplicity
zero as in $(**)$ with $0\leq d<\dfrac{m}{m_0m_1m_0}$ or $0\leq
d<\dfrac{m}{m_0m_1m_1}$, given any $p$ with $m|p$, if
$\Gamma(\alpha;p)$ preserves the Dadarlat-Loring order, then
$\beta_0x-dm_1\geq0$.

\end{prop}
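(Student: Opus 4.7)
The plan is to apply the order-preserving triple $\Gamma(\alpha;p)$ to two specific positive elements of $K_{0}(A_{m};G_{p})$ and extract the inequality $a:=\beta_{0}x-dm_{1}\geq 0$ from the positivity condition of the image in $K_{0}^{+}(B_{n};G_{p})$. Writing also $b:=\beta_{1}x+dm_{0}$ and assembling $\Gamma(\alpha;p)$ from Lemma~\ref{induce map} applied to $(**)$, its $K_{0}$-component is multiplication by $x$, and its mod-$p$ component is the matrix $\left(\begin{smallmatrix}am_{0}&bm_{0}\\am_{1}&bm_{1}\end{smallmatrix}\right)$ acting on $Z(m,p)$.

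First I would use $[\delta_{0}^{A_{m}}]\in K_{0}^{+}(A_{m};G_{p})$, which by Theorem~\ref{calcu} corresponds to $(1,\bar 0,\bar 0)$ in $\mathbb{Z}\oplus Z(m,p)$; its image $(x,\bar 0,\bar 0)$ is Dadarlat-Loring positive only when $x\geq 0$. The main step is to feed in $[\overline{id}^{A_{m}}]=\tfrac{p}{m}(1,\bar m_{0},\bar 0)$, which is positive by Lemma~\ref{lem:generator of cone}; a direct matrix calculation then gives
\[
\Gamma(\alpha;p)\bigl([\overline{id}^{A_{m}}]\bigr)=\Bigl(\tfrac{xp}{m},\;\overline{\tfrac{apm_{0}^{2}}{m}},\;\overline{\tfrac{apm_{0}m_{1}}{m}}\Bigr)\in\mathbb{Z}\oplus Z(n,p).
\]

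Next, I would argue by contradiction, supposing $a<0$. In the first range $d<\tfrac{m}{m_{0}m_{1}m_{0}}$, the inequality $|a|\leq dm_{1}<\tfrac{m}{m_{0}^{2}}$ pins down the $[0,p)$-representative of the first mod-$p$ coordinate as $\tfrac{p(m-|a|m_{0}^{2})}{m}$, so that Dadarlat-Loring positivity of that coordinate reads $\tfrac{xpm_{0}}{m}\geq\tfrac{p(m-|a|m_{0}^{2})}{m}$, i.e., $x+|a|m_{0}\geq\tfrac{m}{m_{0}}$. Using $\beta_{0}m_{0}+\beta_{1}m_{1}=1$ together with $a<0$, a short manipulation yields the key identity $x+|a|m_{0}=m_{1}b$, so the condition becomes $m_{0}m_{1}b\geq m$. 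But $x\geq 0$ and $\beta_{1}\leq 0$ force $b\leq dm_{0}<\tfrac{m}{m_{0}m_{1}}$, a contradiction. The symmetric case $d<\tfrac{m}{m_{0}m_{1}m_{1}}$ is handled identically via the second mod-$p$ coordinate: $|a|m_{0}m_{1}<m$ there, positivity reduces to $m_{1}^{2}b\geq m$, and $b\leq dm_{0}<\tfrac{m}{m_{1}^{2}}$ again gives a contradiction.

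The delicate point is precisely the control of the $[0,p)$-representative of the negative integer appearing in the image: the two ranges of $d$ in the hypothesis are designed exactly so that this negative quantity has absolute value less than $p$, letting the mod-$p$ reduction be handled by adding a single copy of $p$. Once this is done, the positivity constraint unwinds into an inequality that directly contradicts the smallness of $b$ forced by the very same $d$-bound, completing the proof.
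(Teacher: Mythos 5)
Your proof is correct, and it rests on the same two pillars as the paper's: testing order-preservation on one of the ``identity-type'' generators of the positive cone, and using the smallness hypothesis on $d$ to pin down the $[0,p)$-representative of the resulting mod-$p$ class so that the positivity inequality becomes an honest integer inequality. The execution differs in two ways. The paper first adds the positive triple $-x\beta_1\Gamma(\delta_1;p)$ to kill the $\beta_1$-column, then evaluates the modified (still positive) triple on $[id]=(\tfrac{p}{m},\bar 0,\tfrac{p}{m}\bar m_1)$; there the $d$-bound makes the relevant class $\overline{\tfrac{pm_1}{m}dm_0m_i}$ literally equal to its representative, and $\beta_0x-dm_1\geq 0$ drops out directly. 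You instead evaluate $\Gamma(\alpha;p)$ itself on $[\overline{id}]=\tfrac{p}{m}(1,\bar m_0,\bar 0)$, which isolates the column $a=\beta_0x-dm_1$, and argue by contradiction: assuming $a<0$, the bounds $|a|\leq dm_1$ and the hypothesis on $d$ keep $|a|m_0^2$ (resp.\ $|a|m_0m_1$) below $m$, so the representative is $p$ minus the obvious quantity, and the identity $x+|a|m_0=m_1b$ (from $x=am_0+bm_1$) turns positivity into $m_0m_1b\geq m$ (resp.\ $m_1^2b\geq m$), contradicting $b\leq dm_0$. Your route avoids the auxiliary modification of the triple at the cost of the extra mod-$p$ bookkeeping for a negative integer and the algebraic identity; both arguments also correctly establish $x\geq 0$ first from the image of $[\delta_0]$. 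The only nitpick is attributional: the positivity of $[\overline{id}]$ follows from the definition of the Dadarlat--Loring cone (it is the class of a $*$-homomorphism), not really from Lemma~\ref{lem:generator of cone}, which states the converse generation statement.
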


\begin{proof} We know that $$\Gamma(\alpha;p)=(x, \left(
                                         \begin{array}{cc}
                                           xm_0\beta_0 & xm_0\beta_1 \\
                                           xm_1\beta_0 & xm_1\beta_1 \\
                                         \end{array}
                                       \right),
0)+d(0, \left(
                                                 \begin{array}{cc}
                                                   -m_1m_0 & m_0m_0 \\
                                                   -m_1m_1 & m_0m_1 \\
                                                 \end{array}
                                               \right)
, 0).$$ Since we always assume $\beta_0\geq0, \beta_1\leq0$, if
$\alpha$ preserves the Dadarlat-Loring order, then $x\geq0$. So
$-x\beta_1\Gamma(\delta_1;p)$ preserves the Dadarlat-Loring order,
so $\Gamma(\alpha;p)-x\beta_1\Gamma(\delta_1;p)$ also preserves this
order.

Note that $$-x\beta_1\Gamma(\delta_1;p)=(-x\beta_1m_1, \left(
                                                                \begin{array}{cc}
                                                                  0 & -x\beta_1m_0 \\
                                                                  0 & -x\beta_1m_1 \\
                                                                \end{array}
                                                              \right),
0).$$ Hence,
\begin{align*}\Gamma(\alpha;p)-x\beta_1\Gamma(\delta_1;p)&=(x(1-\beta_1m_1), \left(
                                                                                \begin{array}{cc}
                                                                                  xm_0\beta_0-dm_1m_0 & dm_0m_1 \\
                                                                                  xm_1\beta_0-dm_1m_1 & dm_0m_1 \\
                                                                                \end{array}
                                                                              \right)
 , 0)\\
 &=(x\beta_0m_0, \left(
                                                                                \begin{array}{cc}
                                                                                  (x\beta_0-dm_1)m_0 & dm_0m_0 \\
                                                                                  (x\beta_0-dm_1)m_1 & dm_0m_1 \\
                                                                                \end{array}
                                                                              \right) , 0)
\end{align*} Because this is positive, then the image of $(\dfrac{p}{m}, \bar{0},
\dfrac{p}{m}\bar{m_1})\in K_0(A_m;G_p)$ under this triple is in the
positive cone of $K_0(B_n;G_p)$. This means that
$$\dfrac{px\beta_0m_0}{m}m_0\geq\overline{\dfrac{pm_1}{m}dm_0m_0},\, \text{and}\, \dfrac{px\beta_0m_0}{m}m_1\geq\overline{\dfrac{pm_1}{m}dm_0m_1}.$$
Since $0\leq d<\dfrac{m}{m_0m_1m_0}$ or $0\leq
d<\dfrac{m}{m_0m_1m_0}$, then we have $\dfrac{pm_1}{m}dm_0m_0<p$ or
$\dfrac{pm_1}{m}dm_0m_1<p$. Therefore, we have either
$\dfrac{px\beta_0m_0}{m}m_0\geq \dfrac{pm_1}{m}dm_0m_0$ or
$\dfrac{px\beta_0m_0}{m}m_1\geq \dfrac{pm_1}{m}dm_0m_1$, in any
case, we get $\beta_0x-dm_1\geq 0$.
\end{proof}

\begin{remark} For the classical dimension drop interval algebras, we can choose $\beta_0=1, \beta_1=0$, then $(**)$ becomes
$$\alpha=(x-d)\delta_0+d\delta_1.$$ Hence $\alpha$ is positive in Dadarlat-Loring's sense if and only if it can be lifted to a homomorphism.
This proposition is then exactly Lemma 3.1 in \cite{Ei}. But now we
could have $\beta_1\leq0$, this indicates that the Dadarlat-Loring
order may fail to guarantee the lifting of $\alpha$.
\end{remark}

\begin{prop} Given any KK-element $\alpha\in KK(A_m,B_n)$ with $K_1$-multiplicity
zero as in $(**)$, if the $K_0$-multiplicity $x\geq m$, then
$\alpha$ can be lifted to a homomorphism between the algebras.
\end{prop}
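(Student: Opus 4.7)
The plan is to exploit the KK-relation $\frac{m}{m_0}\delta_0 = \frac{m}{m_1}\delta_1$ (immediate from Remark~\ref{relation} and Proposition~4.6, where $m_1\delta_0 - m_0\delta_1$ is identified as the torsion generator of order $\frac{m}{m_0 m_1}$ in $KK(A_m,B_n)$) to rewrite $\alpha$ as $c_0'\delta_0 + c_1'\delta_1$ with both coefficients non-negative integers. Once such a representation is secured, the direct sum
$$\varphi = \underbrace{\delta_0 \oplus \cdots \oplus \delta_0}_{c_0'\ \text{copies}} \;\oplus\; \underbrace{\delta_1 \oplus \cdots \oplus \delta_1}_{c_1'\ \text{copies}} : A_m \longrightarrow M_{c_0' m_0 + c_1' m_1}(B_n)$$
is a genuine $*$-homomorphism whose KK-class is exactly $\alpha$, providing the desired lift.

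To produce such a presentation, set $c_0 = \beta_0 x - d m_1$ and $c_1 = \beta_1 x + d m_0$ and compute
$$c_0 m_0 + c_1 m_1 = x(\beta_0 m_0 + \beta_1 m_1) = x.$$
Since $\frac{m}{m_0}\delta_0 - \frac{m}{m_1}\delta_1 = 0$ in $KK(A_m,B_n)$, and $m_0, m_1$ both divide $m$, for every integer $k$ we have
$$\alpha = \Bigl(c_0 + k\tfrac{m}{m_0}\Bigr)\delta_0 + \Bigl(c_1 - k\tfrac{m}{m_1}\Bigr)\delta_1,$$
so it suffices to choose an integer $k$ in the interval $\bigl[-\tfrac{c_0 m_0}{m},\ \tfrac{c_1 m_1}{m}\bigr]$. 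A short calculation gives the length of this interval as $\tfrac{c_0 m_0 + c_1 m_1}{m} = \tfrac{x}{m}$.

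The only step with content is this length estimate, and it is precisely where the hypothesis $x \geq m$ enters: it forces the interval to have length at least one, so it contains some integer $k_0$. With $c_0' := c_0 + k_0 \tfrac{m}{m_0} \geq 0$ and $c_1' := c_1 - k_0 \tfrac{m}{m_1} \geq 0$ (both integers because $m_0, m_1 \mid m$), one obtains $\alpha = c_0' \delta_0 + c_1' \delta_1$ in $KK(A_m,B_n)$, and the direct-sum $*$-homomorphism displayed above lifts $\alpha$. The main (mild) obstacle is this discreteness gap: the argument using only $\delta_0$ and $\delta_1$ cannot work when $x < m$, since the relevant interval would have length strictly less than one and might contain no integer; the more delicate small-$x$ regime would presumably need to bring in the maps $id_{m,n}$ and $\overline{id}_{m,n}$, as in Lemma~\ref{lem:generator of cone}.
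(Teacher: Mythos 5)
Your proof is correct and follows essentially the same route as the paper: both arguments use the relation $\tfrac{m}{m_0}\delta_0=\tfrac{m}{m_1}\delta_1$ to shift the coefficients of $(**)$ by integer multiples of $(\tfrac{m}{m_0},-\tfrac{m}{m_1})$ until both are non-negative, with $x\geq m$ guaranteeing such a shift exists. The paper phrases this via Euclidean division of the two coefficients by $\tfrac{m}{m_0}$ and $\tfrac{m}{m_1}$ and checks $j_0-j_1\geq 1$, whereas your interval-of-length-$x/m$ formulation packages the same computation a bit more transparently.
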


\begin{proof} By Remark \ref{relation}, to determine whether
$\alpha\in(**)$ can be lifted, it is equivalent to count the numbers
of $\delta_0$ and $\delta_1$. If $x\geq m$, then
$\beta_0x-dm_1\geq0$, and $\beta_1x+dm_0\leq0$. Assume that
$\beta_0x-dm_1=\dfrac{m}{m_0}j_0+r_0, 0\leq r_0<\dfrac{m}{m_0}$, and
$|\beta_1x+dm_0|=\dfrac{m}{m_1}j_1+r_1, 0\leq r_1<\dfrac{m}{m_1}$.
Then
\begin{align*}\alpha&=j_0\dfrac{m}{m_0}\delta_0+r_0\delta_0-(j_1\dfrac{m}{m_1}\delta_1+r_1\delta_1)\\
&=((j_0-j_1)\dfrac{m}{m_1}-r_1)\delta_1+r_0\delta_0
\end{align*} While
\begin{align*}j_0-j_1&=\dfrac{\beta_0x-dm_1-r_0}{\dfrac{m}{m_0}}-\dfrac{|\beta_1x+dm_0|-r_1}{\dfrac{m}{m_1}}\\
&=\dfrac{x-(r_0m_0-r_1m_1)}{m}
\end{align*} Since $x\geq m$, we have that $j_0-j_1\geq 1$. So $\alpha$ can be lifted.
\end{proof}

To make situation easier, we assume $d=0$, we try to investigate the
exact conditions which imply $\alpha$ in $(**)$ preserves the
Dadarlat-Loring order. We have the following proposition.

\begin{prop} Given $\alpha=\beta_0x\delta_0+\beta_1x\delta_1$, let $R$ be the remainder of $\beta_0m_0m_0x$ divided by $m$
, and $S$ be the remainder of $\beta_0m_0m_1x$ divided by $m$. Then
$\Gamma(\alpha;p)$ preserves the Dadarlat-Loring order structure if
and only if $x=0$ or
\begin{align} \beta_0m_0m_0x\geq m,\, \beta_0m_0m_1x\geq m\\
m_0x\geq R,\, m_1x\geq S.
\end{align}
\end{prop}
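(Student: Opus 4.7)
The plan is to exploit the finite generation of the Dadarlat--Loring positive cone from Lemma~\ref{lem:generator of cone}, reducing the order-preservation claim to positivity of $\Gamma(\alpha;p)$ on the four canonical generators, then translating each membership into the numerical inequalities of the statement.

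First I would observe that by Lemma~\ref{lem:generator of cone}, every element of $K_0^+(A_m;G_p)$ is a nonnegative integer combination of $[\delta_0]$, $[\delta_1]$, $[id]$ and $[\overline{id}]$. Since $\Gamma(\alpha;p)$ is additive, it preserves the Dadarlat--Loring order if and only if each of $\Gamma(\alpha;p)([\delta_0])$, $\Gamma(\alpha;p)([\delta_1])$, $\Gamma(\alpha;p)([id])$, and $\Gamma(\alpha;p)([\overline{id}])$ lies in $K_0^+(B_n;G_p)$. In particular, $\Gamma(\alpha;p)([\delta_0])=(x,\bar 0,\bar 0)$ forces $x\geq 0$, and the case $x=0$ yields $\alpha=0$ and the trivial branch of the disjunction.

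Next I would compute the remaining three images explicitly. With $d=0$ and $K_1$-multiplicity zero, the middle matrix of $\Gamma(\alpha;p)$ is $\sigma=\left(\begin{smallmatrix} xm_0\beta_0 & xm_0\beta_1 \\ xm_1\beta_0 & xm_1\beta_1\end{smallmatrix}\right)$ from Theorem~\ref{struc}. Using the identifications of $[\delta_1]$, $[id]$, $[\overline{id}]$ in Theorem~\ref{calcu}, the relation $\beta_0m_0+\beta_1m_1=1$, and the decompositions $\beta_0m_0^2x=mq+R$ and $\beta_0m_0m_1x=mq'+S$, direct calculation modulo $p$ gives $\Gamma(\alpha;p)([\delta_1])=(x,\overline{xm_0},\overline{xm_1})$, together with
\[
\Gamma(\alpha;p)([\overline{id}])=\Bigl(\tfrac{xp}{m},\,\overline{\tfrac{pR}{m}},\,\overline{\tfrac{pS}{m}}\Bigr)
\quad\text{and}\quad
\Gamma(\alpha;p)([id])=\Bigl(\tfrac{xp}{m},\,\overline{\tfrac{p(xm_0-R)}{m}},\,\overline{\tfrac{p(xm_1-S)}{m}}\Bigr).
\]

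Then I would translate each image into the positivity criterion of Theorem~\ref{calcu}, namely $am_0\geq b$ and $am_1\geq c$ with $b,c$ the smallest nonnegative residues modulo $p$. For $[\delta_1]$ the inequalities are automatic once $x\geq 0$. For $[\overline{id}]$ the quantities $\tfrac{pR}{m}$ and $\tfrac{pS}{m}$ already lie in $[0,p)$, so positivity is equivalent to $m_0x\geq R$ and $m_1x\geq S$, which is precisely (5.2). For $[id]$ the quantity $\tfrac{p(xm_0-R)}{m}$ may be negative or at least $p$, so I would split into cases according to the sign of $xm_0-R$ and, when nonnegative, on whether it is less than $m$; in the course of reducing to the smallest nonnegative residue and comparing with $\tfrac{xpm_0}{m}$, the inequality $\beta_0m_0^2x\geq m$ emerges as the condition under which the residue is compatible with the required inequality, and symmetrically $\beta_0m_0m_1x\geq m$ for the second component. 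Together these yield (5.1).

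The main obstacle will be the modular bookkeeping for the $[id]$ generator: tracking which lift of $\overline{\tfrac{p(xm_0-R)}{m}}$ lies in $[0,p)$ and verifying that the resulting comparison with $\tfrac{xpm_0}{m}$ is equivalent to the conjunction of (5.1) and (5.2), together with the symmetric argument for the second component. The converse implications then follow by reversing these reductions and reconstructing the image data from the inequalities. Assembling the four checks yields the stated equivalence, with the $x=0$ branch covering the zero morphism.
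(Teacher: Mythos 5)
Your overall strategy coincides with the paper's: reduce order-preservation to positivity of $\Gamma(\alpha;p)$ on the four generators $[\delta_0],[\delta_1],[id],[\overline{id}]$ of the cone via Lemma~\ref{lem:generator of cone}, compute the images, and translate into inequalities. Your computations of the images are correct (your $\overline{pR/m}$, $\overline{p(xm_0-R)/m}$, etc., agree with the paper's $\overline{rp}$, $\overline{m_0\tfrac{p}{m}x-rp}$ after the substitution $R=rm$).

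However, there is a genuine gap in where you locate condition (5.1). If you actually carry out the ``modular bookkeeping'' for $[id]$ that you defer, you will find that positivity of $\Gamma(\alpha;p)([id])$ is equivalent to $m_0x\geq R$ and $m_1x\geq S$ alone: when $m_0x-R\geq 0$ the canonical representative of $\overline{\tfrac{p(m_0x-R)}{m}}$ is at most $\tfrac{p(m_0x-R)}{m}\leq\tfrac{pm_0x}{m}$, and when $m_0x-R<0$ the representative is $\tfrac{p(m_0x-R+m)}{m}>\tfrac{pm_0x}{m}$; so the inequality $\beta_0m_0^2x\geq m$ never ``emerges'' from this generator. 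In the paper, (5.1) is extracted from $[\overline{id}]$: the second coordinate there is $\overline{\tfrac{p}{m}\beta_0m_0^2x}$, and if $\beta_0m_0^2x<m$ its canonical representative is $\tfrac{p}{m}\beta_0m_0^2x$ itself, so the required inequality $\tfrac{p}{m}m_0x\geq\tfrac{p}{m}\beta_0m_0^2x$ forces $\beta_0m_0\leq 1$, which is excluded in the regime of genuinely different dimension drops ($\beta_1<0$, hence $\beta_0m_0=1-\beta_1m_1\geq 2$). Your shortcut of writing that representative directly as $\tfrac{pR}{m}$ discards exactly this case distinction, so your argument as planned establishes only (5.2) and cannot recover the necessity of (5.1). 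To repair it you must either redo the $[\overline{id}]$ analysis splitting on whether $\beta_0m_0^2x<m$, or observe explicitly that (5.2) together with $\beta_0m_0\geq 2$ and $x\geq 1$ implies (5.1); as written, the proof is incomplete.
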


\begin{proof} To determine whether $\Gamma(\alpha;p)$ preserves the
Dadarlat-Loring order, by Lemma \ref{lem:generator of cone}, we only
need to work on the generators of positive cone: \begin{align*}
&[\delta_0]=(1,\bar{0},\bar{0}), [\delta_1]=(1,\bar{m}_0,\bar{m}_1)\\
&[id]=(\dfrac{p}{m},\bar{0},\dfrac{p}{m}\bar{m}_1),
[\overline{id}]=(\dfrac{p}{m},\dfrac{p}{m}\bar{m}_0,\bar{0})
\end{align*}

1. The image of $[\delta_0]$ is automatically positive.

2. For the image of $[\delta_1]$,
\begin{align*}
\Gamma(\alpha;p)([\delta_1])&=(x, \left(
                                                      \begin{array}{cc}
                                                        \beta_0m_0x & \beta_1m_0x \\
                                                        \beta_0m_1x & \beta_1m_1x \\
                                                      \end{array}
                                                    \right)\left(
                                                             \begin{array}{c}
                                                               \bar{m}_0 \\
                                                               \bar{m}_1 \\
                                                             \end{array}
                                                           \right)
                                                    ,
0)\\
&=(x, \left(
        \begin{array}{c}
          \bar{m}_0x \\
          \bar{m}_1x \\
        \end{array}
      \right)
0),
\end{align*} This is also always positive.

3. The image of $[\overline{id}]$ is
\begin{align*}\Gamma(\alpha;p)([\overline{id}])&=(x\dfrac{p}{m}, \left(
                                                      \begin{array}{cc}
                                                        \beta_0m_0x & \beta_1m_0x \\
                                                        \beta_0m_1x & \beta_1m_1x \\
                                                      \end{array}
                                                    \right)\left(
                                                             \begin{array}{c}
                                                               \dfrac{p}{m}\bar{m}_0 \\
                                                               \bar{0} \\
                                                             \end{array}
                                                           \right)
                                                    ,
0)\\
&=(x\dfrac{p}{m}, \left(
        \begin{array}{c}
          \bar{m}_0x\beta_0m_0\dfrac{p}{m} \\
          \bar{m}_0x\beta_0m_1\dfrac{p}{m}\\
        \end{array}
      \right)
0).
\end{align*} Positivity means that $$x\dfrac{p}{m}m_0\geq \bar{m}_0x\beta_0m_0\dfrac{p}{m},\; x\dfrac{p}{m}m_1\geq
\bar{m}_0x\beta_0m_1\dfrac{p}{m}.$$ These force first that
\begin{align*} \beta_0m_0m_0x\geq m,\; \beta_0m_0m_1x\geq m.
\end{align*} Moreover, write \begin{align*}\dfrac{\beta_0m_0m_0x}{m}=\llcorner\dfrac{\beta_0m_0m_0x}{m}\lrcorner+r, 0\leq
r<1\\
\dfrac{\beta_0m_0m_1x}{m}=\llcorner\dfrac{\beta_0m_0m_1x}{m}\lrcorner+s,
0\leq s<1
\end{align*}, then positivity is equivalent to \begin{align*}
\overline{rp}\leq \dfrac{pm_0x}{m}\quad\overline{sp}\leq
\dfrac{pm_1x}{m},
\end{align*} which is \begin{align*}
R=rm\leq m_0x\quad S=sm\leq m_1x.
\end{align*}

4. The image of $[id]$ is
\begin{align*}\Gamma(\alpha;p)([id])&=(x\dfrac{p}{m}, \left(
                                                      \begin{array}{cc}
                                                        \beta_0m_0x & \beta_1m_0x \\
                                                        \beta_0m_1x & \beta_1m_1x \\
                                                      \end{array}
                                                    \right)\left(
                                                             \begin{array}{c}
                                                               \bar{0} \\
                                                               \dfrac{p}{m}\bar{m}_1 \\
                                                             \end{array}
                                                           \right)
                                                    ,
0)\\
&=(x\dfrac{p}{m}, \left(
        \begin{array}{c}
          \bar{m}_1x\beta_1m_0\dfrac{p}{m} \\
          \bar{m}_1x\beta_1m_1\dfrac{p}{m}\\
        \end{array}
      \right)
0).
\end{align*}
Positivity means that $$x\dfrac{p}{m}m_0\geq
\bar{m}_1x\beta_1m_0\dfrac{p}{m},\; x\dfrac{p}{m}m_1\geq
\bar{m}_1x\beta_1m_1\dfrac{p}{m}.$$ Note that $\beta_1\leq0$, we
need a little more work. Since
$\beta_1m_1m_0x\dfrac{p}{m}+(\beta_0m_0-1)m_0x\dfrac{p}{m}=0$, so
\begin{align*}
\bar{m}_1x\beta_1m_0\dfrac{p}{m}&=(-\beta_0m_0+1)\dfrac{p}{m}x\bar{m}_0,\\
&=\bar{m}_0\dfrac{p}{m}x-\bar{m}_0\beta_0m_0\dfrac{p}{m}\\
&=\bar{m}_0\dfrac{p}{m}x-\overline{rp}\\
&=\overline{m_0\dfrac{p}{m}x-rp}
\end{align*}  The condition $R\leq m_0x$ is equivalent to $m_0\dfrac{p}{m}x-rp\geq0$. Hence, $x\dfrac{p}{m}m_0\geq
\bar{m}_1x\beta_1m_0\dfrac{p}{m}$. Similarly, with the condition
$S\leq m_1x$ we have $x\dfrac{p}{m}m_1\geq
\bar{m}_1x\beta_1m_1\dfrac{p}{m}$.
\end{proof}

\begin{remark} In the classical dimension drop algebra case, the conditions above becomes $x\geq m$, and this is enough to guarantee a lifting.
So for general case, the non-positive number $\beta_1$ caused by
different dimension drops at two endpoints really gives us the
possibility for counterexamples.

\end{remark}

Now, we are able to prove the main Theorem \ref{main}, namely, give
examples of KK-elements which preserve the Dadarlat-Loring order
structure, and fail to be lifted to a $*$-homomorphism.

\begin{proof}[Proof of Theorem \ref{main}]  Let $m_0=2, m_1=3, m=12$, then we take $\beta_0=2,
\beta_1=-1$, we want some KK-elements $\alpha=2x\delta_0-x\delta_1$,
such that $\alpha$ preserves the Dadarlat-Loring order structure but
fail to be lifted to a $*$-homomorphism. By Proposition 5.8, we need
the $K_0$-multiplicity $x$ satisfies the following
inequalities: \begin{align}8x\geq 12, 12x\geq12\\
2x\geq R, 3x\geq S
\end{align} From (5.3), we get $x\geq2$, take $x=2$, then (5.2) is
satisfied, then $\alpha=4\delta_0-2\delta_1$ can not be lifted to a
homomorphism by Jiang and Su's criterion. By Proposition 5.7, such
examples only exist for $x<m$, and under our algorithm, we can
actually determine all the possible examples. If $x=5$, we get
$10\delta_0-5\delta_1=4\delta_0-\delta_1$, which also fits our
purpose. In fact, $x=3$ and $x=5$ are all the possibilities.
\end{proof}
\begin{remark} 1. Let us summarize the KK-lifting story since
Elliott's paper \cite{Ell}, he defined an order structure which is
good for KK-lifting of circle algebras, then S. Eilers found
counterexamples for classical dimension drop algebras, i.e.,
$\delta_0-\delta_1$ (with symmetric coefficients not exceed the
generic size), then Dadarlat and Loring defined an order structure
on the K-theory with coefficient, which can kill the counterexample
$\delta_0-\delta_1$. Now, we find other counterexamples for
generalized dimension drop algebras, namely, certain linear
combination of $\delta_0, \delta_1$ with non symmetric sizes, a
natural question is do we have a new order structure to exclude
these ones. This could be answered in another paper.

2. In the present paper, we show the existence theorem fails at
building block level (as for generalized dimension drop interval
algebras), but we didn't investigate it for limit algebras. For
simple limits, as it is well known, the $K_0$-multiplicities of any
partial map would be arbitrarily large, then no trouble. For real
rank zero case, we can still get a classification by the
Dadarlat-Loring order structure, because real rank zero condition
can more or less control the dynamical behavior of connecting maps.
This is done in a forthcoming paper. However, the most general
limits are different story.

3. Jiang and Su's criterion for KK-lifting is useful for more
general C*-algebras on interval with dimension drops, e.g. splitting
interval algebras with dimension drops in \cite{Li}.

\end{remark}

\proof[Acknowledgements]This paper was completed while the second
author was a postdoctoral fellow at the Fields Institute (and
University of Toronto). We wish to thank the Fields Institute for
its hospitality.

\end{document}